\documentclass[11pt, a4paper, english]{smfart}

\usepackage{smfthm}

\usepackage{amssymb}
\usepackage{tikz-cd}
\usepackage{mathtools}

\usepackage{hyperref}
\usepackage{enumitem}
\usepackage{comment}
\usepackage{cleveref}

\renewcommand{\geq}{\geqslant}
\renewcommand{\leq}{\leqslant}

\theoremstyle{plain}
\newtheorem{theorem}{Theorem}[section]

\newtheorem{lemma}[theorem]{Lemma}
\newtheorem{conjecture}[theorem]{Conjecture}
\newtheorem{corollary}[theorem]{Corollary}
\newtheorem{proposition}[theorem]{Proposition}

\theoremstyle{remark}
\newtheorem{definition}[theorem]{Definition}
\newtheorem{remark}[theorem]{Remark}
\newtheorem{example}[theorem]{Example}

\newcommand{\CC}{\ensuremath{\mathbb{C}}}

\newcommand{\PP}{\ensuremath{\mathbb{P}}}
\newcommand{\QQ}{\mathbb{Q}}

\newcommand{\ZZ}{\ensuremath{\mathbb{Z}}}

\DeclareMathOperator{\GL}{GL}

\DeclareMathOperator{\Spec}{Spec}
\DeclareMathOperator{\End}{End}

\DeclareMathOperator{\Gal}{Gal}

\DeclareMathOperator{\MT}{MT}

\DeclareMathOperator{\h}{\mathsf{h}}

\title[Weil fourfolds with discriminant 1 and singular OG6-varieties]{The Hodge conjecture for Weil fourfolds with discriminant 1 via singular OG6-varieties}

\author{Salvatore Floccari}
\address{Humboldt-Universität zu Berlin, Institut für Mathematik, Germany}
\email{salvatore.floccari@hu-berlin.de}

\author{Lie Fu}
\address{Institut de Recherche Mathématique Avancée, Universit\'e de Strasbourg, France}
\email{lie.fu@math.unistra.fr}

\begin{document}

	\keywords{Hodge conjecture, hyper-K\"ahler varieties, abelian varieties, Kuga--Satake construction, motives, Tate conjecture}
	\subjclass{14C30, 14F20, 14J20, 14J32}

	\begin{altabstract}
		Nous donnons une nouvelle démonstration de la conjecture de Hodge pour les variétés abéliennes de dimension 4 de type Weil de discriminant 1, ainsi que pour toutes leurs puissances. La conjecture de Hodge pour ces variétés abéliennes de dimension 4 a été démontrée par Markman à l’aide de faisceaux hyperholomorphes sur des variétés hyperkählériennes de type Kummer généralisé, ainsi que par la construction de faisceaux semiréguliers sur des variétés abéliennes. Notre démonstration repose au contraire sur une relation géométrique directe entre les variétés abéliennes de dimension 4 de type Weil de discriminant 1, et les variétés hyperkählériennes $\widetilde{K}$ de dimension six de type O’Grady, qui apparaissent comme des résolutions symplectiques $\widetilde{K} \to K$ d’une déformation localement triviale d’un espace de modules singulier de faisceaux sur une surface abélienne. Comme applications, nous établissons la conjecture de Hodge et la conjecture de Tate pour toute variété $\widetilde{K}$ de type OG6 comme ci-dessus, ainsi que pour toutes ses puissances.
	\end{altabstract}
	
	\begin{abstract}
		We give a new proof of the Hodge conjecture for abelian fourfolds of Weil type with discriminant 1 and all of their powers.
		The Hodge conjecture for these abelian fourfolds was proven by Markman using hyperholomorphic sheaves on hyper-K\"ahler varieties of generalized Kummer type, and by constructing semiregular sheaves on abelian varieties. 
		Our proof instead relies on a direct geometric relation between abelian fourfolds of Weil type with discriminant $1$ and the six-dimensional hyper-K\"ahler varieties $\widetilde{K}$ of O'Grady type arising as crepant resolutions $\widetilde{K}\to K$ of a locally trivial deformation of a singular moduli space of sheaves on an abelian surface. As applications, we establish the Hodge conjecture and the Tate conjecture for any variety $\widetilde{K}$ of OG6-type as above, and all of its powers.
	\end{abstract}

	\maketitle

	\section{Introduction}
	
	The Hodge conjecture predicts profound and fundamental properties of complex projective manifolds.
	Abelian varieties of Weil type \cite{Weil} constitute an important testing ground for the Hodge conjecture, since the middle cohomology of such an abelian variety contains certain Hodge classes, called \textit{Hodge--Weil classes}, which, in the generic case, cannot be expressed as linear combination of intersections of divisor classes.  
	
	Let us recall the definition of these abelian varieties. For a positive integer $d$, an abelian variety $A$ of even dimension $2n$ is of $\QQ(\sqrt{-d})$-\textit{Weil type} if $\mathrm{End}_{\QQ}(A)$ contains the imaginary quadratic field $\QQ(\sqrt{-d})$ and the action of $\sqrt{-d}$ on the tangent space of the origin of $A$ has eigenvalues $\sqrt{-d}$ and $-\sqrt{-d}$,  both with multiplicity $n$. In each even dimension $2n$, abelian varieties of Weil type form countably many families of dimension~$n^2$, where each family is characterized by the field $\QQ(\sqrt{-d})$ and another discrete invariant $\delta\in \QQ^{*}/\mathrm{Nm}(\QQ(\sqrt{-d})^*)$, which is called the discriminant.
	
	Until recently, even in dimension $4$, only sporadic cases were known about the algebraicity of the Hodge--Weil classes: it was proven by Schoen \cite{Schoen} for abelian fourfolds of $\QQ(\sqrt{-3})$-Weil type with arbitrary discriminant or of  $\QQ(\sqrt{-1})$-Weil type with discriminant 1 (see also van Geemen \cite{vanGeemen1996} for the latter case).  
	
	A breakthrough in this direction is the following theorem of Markman \cite{markman2019monodromy}.
	
	\begin{theorem}[Markman] \label{thm:1}
		The Hodge--Weil classes are algebraic for any abelian fourfold of Weil type with discriminant $1$.
	\end{theorem}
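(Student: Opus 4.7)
The plan is to realize the Hodge--Weil classes on an abelian fourfold $A$ of Weil type with discriminant $1$ as algebraic via a direct geometric correspondence with a six-dimensional hyper-K\"ahler variety $\widetilde{K}$ of OG6 type, rather than via hyperholomorphic sheaves as in Markman's original proof. The starting observation is that $H^{2}(\widetilde{K},\QQ)$ carries a polarized weight-$2$ Hodge structure whose transcendental part should, under favorable lattice conditions, coincide with the Hodge structure obtained from $H^{1}(A)$ via an inverse Kuga--Satake construction. The discriminant~$1$ hypothesis is precisely what makes this matching exact: it guarantees that the relevant lattice embedding is unimodular, and hence that $A$ itself (rather than an isogenous variety) appears as the Kuga--Satake variety of a natural Hodge substructure of $H^{2}(\widetilde{K},\QQ)$.

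Concretely, I would realize $\widetilde{K}$ as a crepant resolution $\widetilde{K}\to K$ of a singular variety $K$ that is a locally trivial deformation of the fiber over the origin of the Albanese map from a moduli space $M_{v}(S)$ of semistable sheaves with Mukai vector $v=(2,0,-2)$ on an abelian surface $S$. For those $\widetilde{K}$ that actually arise from a moduli space, a quasi-universal sheaf on $S\times M_{v}(S)$ and its Chern classes furnish an algebraic correspondence between powers of $S$ and of $\widetilde{K}$. Transporting this correspondence through the Kuga--Satake identification, algebraic classes on $\widetilde{K}^{n}$ such as polynomials in divisor and Chern classes should pull back to algebraic classes on $A^{n}$ spanning the Hodge--Weil subspace.

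To extend from the countable subset of fourfolds $A$ produced by this geometric recipe to every abelian fourfold of Weil type with discriminant $1$, I would invoke the connectedness of the relevant moduli and a standard spreading-out argument: algebraicity of Hodge classes propagates along the Noether--Lefschetz-type loci carrying a Hodge--Weil class, and Markman's monodromy results for OG6 varieties can be used to transport algebraic classes across locally trivial deformations of $\widetilde{K}$, so that one need only exhibit a single $\widetilde{K}$ for each connected component of the Weil family.

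The main obstacle will be the explicit matching of Hodge substructures on the two sides. One must verify that the correspondence built from the universal sheaf actually projects onto the Hodge--Weil subspace of $H^{4}(A^{2})$, and not just onto the subspace generated by products of divisor classes, and moreover that the discriminant~$1$ assumption on the Weil form corresponds on the $\widetilde{K}$ side to a unimodularity property of the BBF lattice on $H^{2}(\widetilde{K},\ZZ)$. This will require a careful K\"unneth-type decomposition of $H^{*}(\widetilde{K})$ compatible with both the resolution $\widetilde{K}\to K$ and the moduli interpretation, together with a precise compatibility between the Kuga--Satake construction and the geometric correspondence defined by the universal sheaf.
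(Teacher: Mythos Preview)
Your proposal has a genuine gap at the deformation step. The quasi-universal sheaf on $S\times M_v(S)$ exists only when $K$ actually is the Albanese fibre of a moduli space on an abelian surface~$S$; such $K$ form a $3$-dimensional sublocus inside the $4$-dimensional moduli of polarized singular OG6-varieties (the paper remarks this explicitly after Definition~\ref{def:SingularOG6}). On this sublocus the abelian fourfold $B_K$ appearing as Kuga--Satake factor is isogenous to $S\times\hat S$, hence \emph{non-simple}, and the Hodge conjecture for such products is classical. So your geometric input never touches the simple Weil fourfolds, which are exactly the hard cases. Your plan to cover them by ``spreading out'' and monodromy goes in the wrong direction: specialization transports algebraicity from the general fibre to special ones, not from a proper sublocus to the generic point, and OG6 monodromy acts on $H^2(\widetilde K,\QQ)$, not on cycles of $B_K$ or on a correspondence between the two. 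There is no universal sheaf, nor any obvious substitute, on a general locally trivial deformation of $K$, so the correspondence you build does not deform.

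The paper fixes this by replacing the universal sheaf with a correspondence that exists for \emph{every} singular OG6-variety $K$: the Mongardi--Rapagnetta--Sacc\`a double cover $Z_K\to K$ from a $\mathrm{K}3^{[3]}$-variety, whose ramification locus is (a blow-up of) $B_K/\pm1$. The embedding of this locus in $Z_K$ is the algebraic cycle that realizes an injection $H^2_{\mathrm{tr}}(Z_K,\QQ)\hookrightarrow H^2(B_K,\QQ)$ for all $K$, in particular for the very general one where $B_K$ is simple. The argument is then completed not by juggling classes on $\widetilde K$, but by identifying $Z_K$ birationally with a moduli space on a K3 surface $S_K$ of Picard rank~$16$, proving the Kuga--Satake--Hodge conjecture for $S_K$, and invoking Varesco's theorem to deduce the Hodge conjecture for powers of $B_K$. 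The final passage from the very general $B_K$ to all Weil fourfolds with discriminant~$1$ is by specialization, now in the correct direction.
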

	
	Markman's proof of the above theorem in \cite{markman2019monodromy} relies on the tight link between such abelian fourfolds and hyper-K\"ahler varieties of generalized Kummer type, discovered by himself and O'Grady~\cite{O'G21}. 
	Via results of Voisin \cite{voisinfootnotes}, Varesco \cite{varesco} and the first author~\cite{floccariKum3}, the above theorem has strong consequences for the Hodge conjecture for hyper-K\"ahler varieties of generalized Kummer type, see \cite{floccariHCKum3, floccariVaresco}.
	
	Building on the aforementioned works \cite{markman2019monodromy, voisinfootnotes, varesco},
	Theorem \ref{thm:1} was strengthened by the first author in \cite{floccari25}, as follows.
	\begin{theorem}\label{thm:1+}
		The Hodge conjecture holds for all powers of any abelian fourfold of Weil type with discriminant $1$.
	\end{theorem}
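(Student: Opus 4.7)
The plan is to bootstrap Theorem \ref{thm:1} to the Hodge conjecture for all powers $A^n$ via a Mumford--Tate analysis. For a generic abelian fourfold $A$ of $\QQ(\sqrt{-d})$-Weil type with discriminant~$1$, the Mumford--Tate group $\MT(A)$ is the full group of Hermitian similitudes of $H^1(A,\QQ)$ with respect to the defining Hermitian form. A representation-theoretic computation going back to Hazama and Moonen--Zarhin then identifies the $\MT(A)$-invariants in $\bigoplus_n H^*(A^n,\QQ)$: the algebra of Hodge classes on all powers of $A$ is generated by divisor classes together with the Hodge--Weil classes in $H^4(A,\QQ)$. Granted this, the Hodge conjecture for every $A^n$ follows at once by combining the Lefschetz $(1,1)$-theorem with Theorem \ref{thm:1}, since intersections and pullbacks of algebraic classes on powers of $A$ stay algebraic.

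The main obstacle is the extension to non-generic members of the discriminant-$1$ Weil family, where $\MT(A)$ drops and additional Hodge classes appear on $A$ and on its powers. I would handle these along the finite list of possible specializations of the generic Mumford--Tate group: in each case $A$ acquires extra endomorphisms and becomes isogenous to a product of abelian varieties of smaller dimension, for which the Hodge conjecture is either classical (products of elliptic curves or of abelian surfaces), known in CM type, or still controlled by Theorem \ref{thm:1} applied to a Weil-type subfactor. A specialization argument applied to cycles constructed at the generic point then takes care of the ``old'' Hodge classes uniformly, while a Tannakian bookkeeping of $\Hom$-spaces between the isogeny factors accounts for the ``new'' ones.

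An alternative, less combinatorial route that I would pursue in parallel is geometric: the Kuga--Satake-type correspondence between Weil fourfolds of discriminant $1$ and hyper-K\"ahler sixfolds (a generalized Kummer variety, or the OG6-type variety $\widetilde{K}$ constructed later in this paper) should produce algebraic correspondences realizing the required Hodge classes on each $A^n$ directly, bypassing the Mumford--Tate case analysis and giving a uniform proof that treats generic and non-generic members on the same footing.
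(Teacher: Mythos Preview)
Your primary route is genuinely different from the paper's, and the non-generic case is where it breaks down. The paper does \emph{not} bootstrap Theorem~\ref{thm:1} via a Mumford--Tate analysis; it gives a uniform argument that works for every $A$ at once. For any $A$ of Weil type with discriminant~$1$, Proposition~\ref{prop:completeFamilies} produces a projective singular $\mathrm{OG}6$-variety $K$ with $B_K$ isogenous to $A$, hence a K3 surface $S_K$ whose Kuga--Satake variety is a power of $A$. Corollary~\ref{cor:HCpowersK3} (which rests on Varesco's work) gives the Hodge conjecture for all powers of $S_K$, and Corollary~\ref{cor:KSforS_K} gives the refined Kuga--Satake--Hodge Conjecture~\ref{conj:refinedKSHC} for $S_K$. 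These two inputs feed into Theorem~\ref{thm:HCpowersKS} (from \cite{floccari25}), which outputs the Hodge conjecture for all powers of $\mathrm{KS}(S_K)$, hence for all powers of $A$. No case analysis of degenerations is needed.

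Your generic step is plausible in spirit: via $\mathrm{SU}(4)\cong\Spin(6)$ and the first fundamental theorem for $\mathrm{SL}_4$, the $\MT(A)$-invariants on $(H^1(A,\QQ))^{\otimes \bullet}$ are generated by the Hermitian form and determinants, and the mixed determinants on $A^n$ can be recovered from pullbacks of Weil classes along addition maps. But this is not what Hazama or Moonen--Zarhin prove; their results concern Hodge classes on $A$ itself (and the reduction of the Hodge conjecture for abelian fourfolds to the Weil case), not the full Hodge ring of all powers. The statement you need is closer to the representation-theoretic core of Varesco's argument, and in any case it requires a proof you have not supplied.

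The real gap is your treatment of special $A$. It is not true that every non-generic discriminant-$1$ Weil fourfold ``becomes isogenous to a product of abelian varieties of smaller dimension'': $A$ can remain simple with a strictly larger endomorphism algebra, for instance a simple CM abelian fourfold with $\End_{\QQ}(A)$ a CM field of degree $8$ containing $\QQ(\sqrt{-d})$. For such $A$ the Hodge conjecture for all powers is not ``classical'' and is not covered by products of curves or surfaces; and invoking Theorem~\ref{thm:1} on a ``Weil-type subfactor'' is vacuous when $A$ is simple. The extra Hodge classes that appear at these special points are not merely $\Hom$-spaces between isogeny factors, so the ``Tannakian bookkeeping'' you allude to is not a routine exercise. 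This is precisely why the paper routes the argument through the K3 surface $S_K$ and Theorem~\ref{thm:HCpowersKS}: that criterion applies uniformly to every $A$, generic or not, and converts the problem into one about powers of K3 surfaces, where Varesco's results are available. Your alternative geometric paragraph gestures in this direction but does not identify the key transfer mechanism (Theorem~\ref{thm:HCpowersKS}) that makes the uniform argument work.
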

	
	Markman has recently given in \cite{Markman-Weil6folds} another proof of Theorem \ref{thm:1}. He is able to obtain the much stronger result that the Hodge--Weil classes are algebraic for \textit{any} abelian fourfold of Weil type, with no restriction on the discriminant. Most remarkably, combined with the work of Moonen and Zahrin \cite{moonenZahrinHodge, moonenZarhin}, this result finally settles the Hodge conjecture for all abelian varieties of dimension $4$.
	
	The purpose of this paper is to provide an alternative proof of Theorem \ref{thm:1} and Theorem \ref{thm:1+},  which makes these results, as well as the subsequent applications, independent from the rather involved construction of hyperholomorphic sheaves in \cite{markman2019monodromy} (or of semi-regular sheaves in \cite{Markman-Weil6folds}).

	\subsection*{Ingredients of the proof}
	Our approach towards Theorem \ref{thm:1} is inspired by the beautiful articles of Markman~\cite{markman2019monodromy}, O'Grady~\cite{O'G21} and Voisin \cite{voisinfootnotes} on the Kuga--Satake construction for varieties of generalized Kummer type, but we work instead with the six-dimensional hyper-K\"ahler varieties discovered by O'Grady~\cite{O'G03}. In fact, we only consider the divisors in their moduli spaces given by the crepant resolutions of \textit{singular OG6-varieties}, which we define as the locally trivial deformations of the Albanese fibre of the singular moduli space of sheaves studied in~\cite{O'G03}; see Definition \ref{def:SingularOG6}. 
	These singular varieties carry pure Hodge structures of K3-type on their second cohomology. 
	We study the Kuga--Satake construction for singular OG6-varieties. 
	
	\begin{theorem}\label{thm:KSsingularOG6}
		Let $K$ be a projective singular $\mathrm{OG}6$-variety. Then:
		\begin{enumerate}[label=(\roman*)]
			\item the singular locus of $K$ is isomorphic to $B_K/\pm 1$ for a $4$-dimensional abelian variety $B_K$, which is of Weil type with discriminant $1$;
			\item the Kuga--Satake variety of $K$ is isogenous to $B^4_K$.
		\end{enumerate}
		Moreover, any abelian fourfold $B$ of Weil type with discriminant $1$ is isogenous to $B_K$ for some projective singular $\mathrm{OG}6$-variety $K$. 
	\end{theorem}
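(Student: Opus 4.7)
The plan is to verify the theorem by direct calculation on O'Grady's original model and then use locally trivial deformation invariance (built into \Cref{def:SingularOG6}) to extend the conclusions to all singular OG6-varieties; the ``moreover'' part would be obtained via a period-map surjectivity argument.

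\textbf{Singular locus and $B_K$.} For the model $K_0$, the Albanese fibre of $M = M^{ss}_{(2,0,-2)}(A,H)$ on a polarized abelian surface $(A,H)$, the strictly polystable points are of the form $[F\oplus F']$ with $F,F'\in M_{(1,0,-1)}(A)\cong A\times\hat A$, and the Albanese-zero condition forces $F'=-F$, so that $\mathrm{Sing}(K_0)\cong(A\times\hat A)/{\pm 1}$. For an arbitrary singular OG6-variety $K$, the local analytic structure along the singular locus is preserved under locally trivial deformation, so $\mathrm{Sing}(K)$ is itself such a deformation of $(A\times\hat A)/{\pm 1}$; the only such deformations are of the form $B_K/{\pm 1}$ for an abelian fourfold $B_K$, which gives the variety-level content of (i).

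\textbf{Kuga--Satake, Weil type and discriminant $1$.} The Hodge structure on $H^2(K)$ is of K3-type; via the crepant resolution $\widetilde K\to K$ its transcendental part has signature $(2,4)$ and rank $6$, so the Kuga--Satake variety has complex dimension $2^{4}=16$, which matches $\dim B_K^4$. I would establish the isogeny $\mathrm{KS}(K)\sim B_K^4$ first for the model $K_0$: the transcendental part of $H^2(K_0)$ reads off $H^2((A\times\hat A)/{\pm 1})$ together with contributions from the exceptional divisors of $\widetilde K_0\to K_0$, and a direct Clifford-algebra analysis of the associated quadratic form identifies the Kuga--Satake variety with $(A\times\hat A)^4$ up to isogeny. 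The natural $\pm 1$ symmetry on $A\times\hat A$ combined with the Poincar\'e bundle equips $A\times\hat A$ with a $\QQ(\sqrt{-1})$-action of Weil type; the discriminant-$1$ condition should follow from the unimodularity (up to controlled factors) of the Beauville--Bogomolov--Fujiki form on the relevant sublattice of $H^2(\widetilde K)$, translated through Clifford algebras into a trivial-discriminant Riemann form on $B_K$. Deformation invariance of the Kuga--Satake construction, and the fact that the Weil/discriminant conditions are closed, extend both conclusions to all $K$.

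\textbf{Surjectivity and main obstacle.} Both the moduli of polarized singular OG6-varieties and the moduli of polarized Weil fourfolds with discriminant $1$ and fixed imaginary quadratic field are $4$-dimensional period domains, and $K\mapsto B_K$ is an \'etale-local isomorphism by the preceding step. Surjectivity onto Weil fourfolds with discriminant $1$ then follows from the global Torelli theorem for $\widetilde K$ descended to $K$: any such fourfold $A$ determines a polarized K3-type Hodge structure in the appropriate period domain, which is realized as $H^2$ of some singular OG6-variety $K$ with $B_K\sim A$. The most delicate step is the explicit Kuga--Satake / Clifford-algebra computation together with the discriminant-$1$ verification, as both require precise lattice-level bookkeeping in the model case; a secondary concern is making the descent of global Torelli from $\widetilde K$ to the singular $K$ fully rigorous.
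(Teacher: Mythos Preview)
Your treatment of the singular locus is correct and matches the paper. The serious gap is in the heart of the argument --- proving that $B_K$ is of Weil type with discriminant~$1$ and that $\mathrm{KS}(K)\sim B_K^4$. Your plan is to compute this directly on the model $K_0=K_A(2,0,-2)$ and then ``deform''. This fails for two reasons. First, the specific claims are wrong: $H^2(K_0,\QQ)$ has rank~$7$ and merely embeds into $H^2(A\times\hat A,\QQ)$ (rank~$28$), so it does not ``read off'' that cohomology; and the $\pm 1$ involution does not combine with the Poincar\'e bundle to give a $\QQ(\sqrt{-1})$-Weil structure --- indeed the theorem asserts the field is $\QQ(\sqrt{-d})$ with $2d$ the Beauville--Bogomolov degree of the polarization, so it varies. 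Second, even a correct computation on $K_0$ would not propagate: the abelian-surface locus is codimension~$1$ in the moduli of polarized singular OG6-varieties, and your ``Weil/discriminant conditions are closed'' would at best let you specialize, not generalize.

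The paper avoids any Clifford-algebra computation. Its two key inputs, both absent from your proposal, are: (a) Lombardo's theorem, which says that for any polarized K3-type Hodge structure isometric to $\mathrm{U}_\QQ^{\oplus 2}\oplus\langle a\rangle\oplus\langle b\rangle$ the Kuga--Satake variety is $C^4$ for some abelian fourfold $C$ of $\QQ(\sqrt{-ab})$-Weil type with discriminant~$1$; since $H^2(K,\QQ)_{\mathrm{prim}}\cong \mathrm{U}_\QQ^{\oplus 2}\oplus\langle -2\rangle\oplus\langle -2d\rangle$, this already gives $\mathrm{KS}(K)\sim C^4$ for \emph{some} such $C$. (b) The geometric fact that the restriction $H^2(K,\QQ)\to H^2(B_K,\QQ)$ is injective (because the symplectic form restricts nontrivially to the singular locus), combined with the universal property of the Kuga--Satake construction (van~Geemen--Voisin, Charles): the embedding $H^2(K,\QQ)_{\mathrm{prim}}\hookrightarrow H^2(B_K,\QQ)$ forces $B_K$ to be an isogeny subquotient of $C^4$, hence $B_K\sim C$ since $C$ is simple and $\dim B_K=4$. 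For the ``moreover'' part the paper again uses Lombardo (now in the forward direction), rescales the quadratic form to match $H^2(K,\QQ)_{\mathrm{prim}}$, and invokes surjectivity of the period map for singular OG6-varieties; this is close in spirit to your outline, but the lattice matching goes through rescaling rather than a descent of global Torelli.
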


	Any singular OG6-variety $K$ is also naturally associated with a K3 surface $S_K$. Indeed, by the results of Mongardi--Rapagnetta--Sacc\`a \cite{MRS18}, any such $K$ admits a rational double cover $Z_K \dashrightarrow K$ where $Z_K$ is a variety of $\mathrm{K}3^{[3]}$-type birational to a moduli space of sheaves on a projective K3 surface $S_K$, uniquely determined up to isomorphism. These K3 surfaces come in $4$-dimensional families of generic Picard rank $16$. 
	
	The Kuga--Satake variety of $S_K$ is isogenous to a power of that of~$K$, and, hence, to a power of $B_K$. By a theorem of Varesco \cite{varesco} and Theorem \ref{thm:KSsingularOG6}, we can reduce Theorem \ref{thm:1} to verifying the so-called Kuga--Satake--Hodge conjecture in this case, namely, showing that the Kuga--Satake correspondence is algebraic for all the K3 surfaces $S_K$. To prove this, we exploit that the double cover $Z_K\dashrightarrow K$ ramifies over the singular locus of $K$, and the ramification locus in~$Z_K$ is birational to $B_K/\pm 1$. The rational embedding of $B_K/\pm 1$ into $Z_K$ yields a cycle which we use to prove that the Kuga--Satake correspondence is algebraic for $S_K$. 
	The proof explains how to recover the Kuga--Satake abelian fourfold $B$ from the K3 surface~$S$: there exists a moduli space of sheaves on $S$ with a birational involution whose ``fixed locus'' is birational to~$B/\pm 1$.
	
	Once the Kuga--Satake--Hodge conjecture for these K3 surfaces is established, we deduce Theorem \ref{thm:1+} using the strategy developed in \cite{floccari25}, hence obtaining a new proof of Theorem \ref{thm:1+} which does not rely on Markman's results in \cite{markman2019monodromy} and \cite{Markman-Weil6folds}.
	
	In the final section, we give applications to the Hodge and Tate conjectures and the study of the homological motive of hyper-K\"ahler varieties. We refer to Section \ref{sec:applications} for the most general statements (Theorem \ref{thm:motivated}, Corollary \ref{cor:applicationHC}) and only mention the following results. 
	
	\begin{theorem}\label{thm:HCOG6-resolutions}
		Let $\widetilde{K}$ be a hyper-K\"ahler variety of $\mathrm{OG}6$-type obtained as a crepant resolution of a singular $\mathrm{OG}6$-variety $K$. Then: 
		\begin{enumerate}[label=(\roman*)]
			\item the Hodge conjecture holds for $\widetilde{K}$ and all of its powers. 
			\item the Kuga--Satake--Hodge conjecture holds for $\widetilde{K}$.
			\item the homological motive of $\widetilde{K}$ is \emph{abelian}, that is, it belongs to the $\QQ$-linear tensor subcategory generated by the motives of abelian varieties. 
		\end{enumerate}
	\end{theorem}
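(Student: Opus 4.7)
The proof plan is to show that the rational Chow motive $\h(\widetilde{K})$ lies in the tensor subcategory of Chow motives generated by $\h(B_K)$, where $B_K$ is the abelian fourfold of Weil type with discriminant $1$ attached to $K$ by \Cref{thm:KSsingularOG6}. Granting this inclusion, all three assertions follow uniformly: (iii) is immediate since the motive of an abelian variety is abelian; (i) follows from \Cref{thm:1+} applied to $B_K$, which propagates to any variety whose motive lies in the tensor subcategory generated by $\h(B_K)$; and (ii) reduces to the algebraicity of the Kuga--Satake correspondence for $K$, which in turn is transported across $\pi\colon\widetilde{K}\to K$ using that the transcendental summand of $H^2(\widetilde{K})$ agrees with that of $H^2(K)$ up to the (algebraic) class of the exceptional divisor.

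I would construct the motivic inclusion in three steps. First, a motivic decomposition of the crepant resolution $\pi\colon\widetilde{K}\to K$: by \Cref{thm:KSsingularOG6}(i) the singular locus of $K$ is $B_K/\pm 1$, and the local model of the resolution is O'Grady's desingularisation of the singular moduli space on an abelian surface, so one obtains $\h(\widetilde{K})\simeq\h(K)\oplus M$, with $M$ motivically generated by $\h(B_K/\pm 1)$ and hence by $\h(B_K)$. Second, the rational double cover $Z_K\dashrightarrow K$ with $Z_K$ of $\mathrm{K}3^{[3]}$-type, after resolution of indeterminacy, expresses $\h(K)$ as a direct summand of $\h(Z_K)$ up to a correction supported on the ramification locus $B_K/\pm 1$, again controlled by $\h(B_K)$. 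Third, since $Z_K$ is birational to a moduli space of sheaves on the K3 surface $S_K$, birational invariance of Chow motives for hyper-K\"ahler varieties together with the motivic results on $\mathrm{K}3^{[n]}$-type varieties (de Cataldo--Migliorini, B\"ulles, Vial) place $\h(Z_K)$ in the tensor subcategory generated by $\h(S_K)$.

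The final ingredient is the algebraicity of the Kuga--Satake correspondence for $S_K$, established in the proof of \Cref{thm:1} via the cycle induced by the rational embedding of $B_K/\pm 1$ into $Z_K$. Combined with the Chow--K\"unneth decomposition of a K3 surface, this realises $\h^2_{\mathrm{tr}}(S_K)$ as a direct summand of $\h(B_K^4)$ through an algebraic correspondence, while the algebraic part of $\h(S_K)$ is generated by divisor classes; hence $\h(S_K)$ itself lies in the subcategory generated by $\h(B_K)$. Chaining the three steps embeds $\h(\widetilde{K})$ in this subcategory, from which (i)--(iii) follow as above. I expect the main technical obstacle to be the second step: the double cover $Z_K\dashrightarrow K$ is only birational, so one has to resolve the indeterminacy, verify that the resulting correspondence produces the desired direct summand $\h(K)\subset \h(Z_K)$, and check that the correction term coming from the ramification is indeed absorbed in the $B_K$-generated piece already identified in step one. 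The remaining ingredients are then a matter of assembling available motivic results.
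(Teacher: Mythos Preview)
Your overall strategy coincides with the paper's: show that $\h(\widetilde{K})$ lies in the tensor subcategory generated by $\h(B_K)$, via the chain $\widetilde{K}\rightsquigarrow Z_K\rightsquigarrow S_K\rightsquigarrow B_K$, and then invoke \Cref{thm:1+}. Your third step (Riess for birational hyper-K\"ahler varieties, B\"ulles for moduli spaces on K3 surfaces) and your final step (algebraicity of the Kuga--Satake correspondence for $S_K$) are exactly what the paper does.

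There are, however, two genuine gaps in the execution. First, you route the argument through $\h(K)$, writing $\h(\widetilde{K})\simeq\h(K)\oplus M$ and then realising $\h(K)$ as a summand of $\h(Z_K)$. But $K$ is \emph{singular}, so $\h(K)$ is not an object of the category of Chow (or homological) motives of smooth projective varieties; the symbol has no meaning here. The paper avoids this entirely. Note that $\phi\colon Z_K\to K$ is already a regular morphism, and it is the induced $Z_K\dashrightarrow\widetilde{K}$ that needs resolving. The paper does this explicitly by two blow-ups of $Z_K$: first $Y=\mathrm{Bl}_{\Gamma}(Z_K)$ with $\Gamma$ a union of $256$ copies of $\PP^3$, then $\hat{Y}=\mathrm{Bl}_{\Delta'}(Y)$ with $\Delta'\cong\mathrm{Bl}_{B_K[2]}(B_K/\pm1)$, after which one has a surjective regular morphism $\hat{Y}\to\widetilde{K}$. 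Then $\widetilde{K}$ is motivated by $\hat{Y}$, the latter by $Z_K\times\Gamma\times\Delta'$, and each factor by $B_K$. No motive of a singular variety is ever invoked.

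Second, you work in Chow motives, but the claim that $\h^2_{\mathrm{tr}}(S_K)$ is a \emph{direct summand} of $\h(B_K^4)$ is delicate there: a correspondence inducing an injection on cohomology need not split at the Chow level. The paper works in homological motives and uses the standard conjectures for $S_K\times B_K^{\,j}$ (known for surfaces and abelian varieties) to get semisimplicity, whence a splitting. Since the theorem only concerns the Hodge conjecture and homological motives, you can simply pass to that category; if you insist on Chow motives you must produce an explicit left inverse, which is not provided by the Kuga--Satake cycle alone.
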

	
	Let now $L\subset \CC$ be a subfield which is finitely generated over $\QQ$. For a smooth projective variety $X$ defined over $L$ and a prime number $\ell$, consider the representation of the absolute Galois group of $L$ on the $\ell$-adic cohomology groups $H^i_{\text{\'et}}(X_{\bar{L}},\QQ_{\ell})$. The strong Tate conjecture predicts that these Galois representations are semisimple and that the cycle class map $\mathrm{cl}^i\colon \mathrm{CH}^i(X)_{\QQ_{\ell}}\to H^{2i}_{\text{\'et}}(X,\QQ_{\ell}(i))^{\Gal(\bar{L}/L)}$ is surjective for any integer $i$.
	
	\begin{theorem}\label{thm:TateConjectureOG6resolutions}
		Let $X$ be a smooth projective variety over $L$ whose base-change $X_{\CC}$ is a crepant resolution of a singular $\mathrm{OG}6$-variety.
		Then, for any prime number $\ell$, the strong Tate conjecture holds for $X$ and all of its powers.
	\end{theorem}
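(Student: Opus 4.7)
The strategy is to reduce the strong Tate conjecture for $X$ and its powers to the strong Tate conjecture for an abelian variety, by exploiting the abelian-motive structure provided by \Cref{thm:HCOG6-resolutions}(iii). By that statement, the homological motive $\h_{\hom}(X_\CC)$ is a direct summand, via algebraic correspondences modulo homological equivalence, of $\h_{\hom}(A_\CC^m)$ for some abelian variety $A_\CC$ and some integer $m\geq 0$; taking tensor powers, an analogous statement holds for every $X_\CC^n$. A standard spreading-out argument then allows one to find a finite extension $L'/L$ inside $\bar L$, an abelian variety $A'$ over $L'$, and algebraic cycles defined over $L'$ realising the same direct-summand decomposition in $\ell$-adic cohomology.

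The next step is to invoke the strong Tate conjecture for $A'$ and all of its powers. Semisimplicity of the Galois action on the $\ell$-adic cohomology of the powers of $A'$ follows from Faltings' theorem and its extension to finitely generated base fields of characteristic zero. For surjectivity of the cycle class map in every codimension, the essential input is Deligne's theorem that Hodge classes on abelian varieties are absolute Hodge, and therefore are Tate classes over any finitely generated subfield of $\CC$; combined with the Hodge conjecture for all powers of $A'_\CC$---which itself follows from \Cref{thm:HCOG6-resolutions}(i) by transporting Hodge classes through our algebraic correspondences---this yields the Tate conjecture for $A'$ and all its powers over $L'$. Transferring back through the algebraic correspondences gives the strong Tate conjecture for $X_{L'}$ and all of its powers.

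Finally, the strong Tate conjecture descends from $L'$ to $L$ by standard arguments: semisimplicity passes from a finite-index subgroup to the whole group in characteristic zero via the usual averaging over coset representatives, and surjectivity of the cycle class map onto $\Gal(\bar L/L)$-invariants follows from surjectivity onto $\Gal(\bar L/L')$-invariants by taking $\Gal(L'/L)$-invariants on both sides. The main technical obstacle in this plan is the spreading-out step, namely, descending $A_\CC$ and the algebraic correspondences to a finite extension of $L$. This should be manageable because $A_\CC$ can be chosen to be a power of the abelian fourfold $B_K$ canonically associated with $X_\CC=\widetilde{K}\to K$ by \Cref{thm:KSsingularOG6}, and the explicit geometric construction of $B_K$ from the singular locus of $K$ should provide a model of $B_K$ together with the required cycles over a finite extension of the field of definition of $X$.
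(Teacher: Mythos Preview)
Your argument has a genuine gap at the step claiming the Tate conjecture for $A'$ and its powers. Deligne's theorem that Hodge classes on abelian varieties are absolute Hodge yields only the inclusion of Hodge classes into Tate classes; combining this with the Hodge conjecture for $(A'_{\CC})^m$ gives $\{\text{algebraic}\}=\{\text{Hodge}\}\subseteq\{\text{Tate}\}$. That is the wrong direction: the Tate conjecture asks for $\{\text{Tate}\}\subseteq\{\text{algebraic}\}$, and the missing inclusion $\{\text{Tate}\}\subseteq\{\text{Hodge}\}$ is precisely the content of the Mumford--Tate conjecture for $A'$. Faltings' theorem supplies semisimplicity and the Tate conjecture for divisors, but not in higher codimension. (There is also a secondary issue: the Hodge conjecture for powers of $A'_{\CC}$ does not follow from \Cref{thm:HCOG6-resolutions}(i) by ``transporting through correspondences'', since $\h(X_{\CC})$ is a summand of $\h(A^m)$ and not conversely; one should instead take $A'$ isogenous to a power of $B_K$ and invoke \Cref{thm:1+} directly.)

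Both gaps are repairable in this specific situation, because $A'$ can be chosen isogenous to a power of the Weil fourfold $B_K$: the Mumford--Tate conjecture is known for such abelian varieties (cf.\ \cite[\textbf{2.4.7}]{moonen17}), and the Hodge conjecture for their powers is \Cref{thm:1+}. The paper's route is more direct and sidesteps the spreading-out and transfer through correspondences entirely: it uses that the Mumford--Tate conjecture holds for $X$ itself, as it does for any hyper-K\"ahler variety of $\mathrm{OG}6$-type by \cite{floccari2019, soldatenkov19, FFZ}; together with the Hodge conjecture for $X_{\CC}$ and all its powers (\Cref{thm:HCOG6-resolutions}(i)), this immediately yields the strong Tate conjecture for $X$ and all its powers.
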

	
	We deduce this result from Theorem \ref{thm:HCOG6-resolutions} using the Mumford--Tate conjecture, which has been proven for all known hyper-K\"ahler varieties \cite{floccari2019, soldatenkov19, FFZ}. See Corollary \ref{cor:TateConjecture} for a more general statement.

	\subsection*{Acknowledgments}
	We are grateful to Kieran O'Grady, Eyal Markman and Claire Voisin for their interest in this project. We thank the anonymous referee for his or her careful reading and many useful comments.
	Salvatore Floccari is supported by the Deutsche Forschungsgemeinschaft (DFG, German Research Foundation) – Project-ID 491392403 – TRR 358. Lie Fu is supported by the University of Strasbourg Institute for Advanced Study (USIAS), by the Agence Nationale de la Recherche (ANR) under projects ANR-20-CE40-0023 and ANR-24-CE40-4098, and by the International Emerging Actions (IEA) of CNRS.

	\section{Preliminaries on the Kuga--Satake construction}
	
	A rational Hodge structure of \textit{K3-type} is an effective pure $\QQ$-Hodge structure $V$ of weight~$2$ such that $V^{2,0}$ is $1$-dimensional. Starting with a polarized rational Hodge structure of K3-type $(V,q)$, where $q$ denotes the polarization on the Hodge structure $V$, the Kuga--Satake construction \cite{KUGA1967, deligne1971conjecture} produces an abelian variety $\mathrm{KS}(V)$, well-defined up to isogeny\footnote{If one starts with a polarized integral Hodge structure of K3-type, the Kuga--Satake construction gives rise to a well-defined abelian variety. We will only need the construction up to isogeny.}. The crucial property of $\mathrm{KS}(V)$ is the existence of an embedding of rational Hodge structures 
	\[
	\mu\colon V \hookrightarrow H^2(\mathrm{KS}(V)\times \mathrm{KS}(V),\QQ).
	\]
	
	Let us briefly recall the construction, following \cite{deligne1971conjecture}. Let $(V,q)$ be as above. Let $\mathsf{T}^{\bullet}V=\bigoplus_{i\geq 0} V^{\otimes i}$ be the tensor algebra of $V$. One considers the Clifford algebra $$C(V)\coloneqq \mathsf{T}^{\bullet}V / \langle v\otimes v - q(v,v)\rangle_{v\in V},$$ which is naturally $\ZZ/2\ZZ$-graded: $C(V)=C^+(V)\oplus C^-(V)$, where $C^+(V)$ (resp. $C^{-}(V)$) is the subspace generated by products of an even (resp. odd) number of vectors in $V$; notice that $C^+(V)$ is a subalgebra of $C(V)$, called the even Clifford algebra, and that $C^-(V)$ is a $C^+(V)$-bimodule via left and right multiplication.
	The Hodge structure on $V$ induces an effective Hodge structure of weight $1$ on $C^+(V)$, and $C^+(V)$ with this Hodge structure is isomorphic to $H^1(\mathrm{KS}(V),\QQ)$ for an abelian variety $\mathrm{KS}(V)$, called the Kuga--Satake variety of $(V,q)$. Fix a vector $v_0\in V$ such that $q(v_0,v_0)\neq 0$. The map $\mu\colon V\to \End(C^+(V))$ given by $\mu(v)(w)=v\cdot w\cdot v_0$ is an embedding of weight-$0$ Hodge structures 
	\[
	{\mu}\colon V(1) \hookrightarrow \End(H^1(\mathrm{KS}(V),\QQ)).
	\]
	Identifying $\End(H^1(\mathrm{KS}(V),\QQ))$ with $H^1(\mathrm{KS}(V),\QQ)^{\otimes 2}(1)$ via a polarization, we obtain an embedding of weight-$2$ Hodge structures 
	\[
	\mu\colon V\hookrightarrow H^2(\mathrm{KS}(V)^2,\QQ).
	\]
	This embedding is called the Kuga--Satake embedding. 
	
	\begin{remark}\label{rmk:functorialityKS}
		Let $(V_1,q_1)$ and $(V_2,q_2)$ be polarized rational Hodge structures of K3-type, and assume that $f\colon V_1\to V_2$ is a Hodge similarity, i.e., an isomorphism of $\QQ$-Hodge structures which multiplies the form by a factor $k\in \QQ^*$. Then, the Kuga--Satake varieties $\mathrm{KS}(V_1)$ and $\mathrm{KS}(V_2)$ are isogenous. In fact, by \cite[Proposition 0.2]{varesco2023hodge}, there exists an isogeny $F\colon \mathrm{KS}(V_1)\to \mathrm{KS}(V_2)$ and a commutative diagram
		\[
		\begin{tikzcd}
			V_1 \arrow{d}{\mu_1} \arrow{r}{f} & V_2 \arrow{d}{\mu_2}\\
			H^2(\mathrm{KS}(V_1)^2, \QQ) \arrow{r}{F_*} & H^2(\mathrm{KS}(V_2)^2,\QQ),
		\end{tikzcd}
		\]
		where $F_*$ is the isomorphism of Hodge structures induced by $F$,
		and $\mu_1$, $\mu_2$ are the respective Kuga--Satake embeddings.
	\end{remark}
	
	Let $(X,h)$ be a polarized hyper-K\"ahler variety, with $H^2(X, \QQ)$ equipped with the Beauville--Bogomolov quadratic form \cite{beauville1983varietes}. Let $H^2(X,\QQ)_{\mathrm{prim}}$ be the second primitive cohomology of $X$, which is identified with the orthogonal complement of the polarization class $h$ with respect to the Beauville--Bogomolov quadratic form by the Fujiki relation \cite{Fujiki-1985}. 
	Then $H^2(X,\QQ)_{\mathrm{prim}}$ equipped with the restriction of the quadratic form is a polarized Hodge structure of K3-type. 
	
	\begin{definition} \label{def:KS-variety} 
		The Kuga--Satake variety $\mathrm{KS}(X)$ of the polarized hyper-K\"ahler variety $(X,h)$ is the abelian variety $\mathrm{KS}(H^2(X,\QQ)_{\mathrm{prim}})$ obtained via the Kuga--Satake construction, well-defined up to isogeny.
	\end{definition} 
	
	\begin{remark}\label{rmk:quasi-polarized}
		Let $(X,h)$ be a quasi-polarized hyper-K\"ahler variety, where $h$ is the cohomology class of a big and nef line bundle. Then $h\in H^2(X,\QQ)$ is a positive class, and its orthogonal complement $H^2(X,\QQ)_{\mathrm{prim}}$, equipped with the restriction of the Beauville--Bogomolov form, is a polarized Hodge structure of K3-type. We then define the Kuga--Satake variety $\mathrm{KS}(X)$ of $(X,h)$ as the abelian variety obtained from $H^2(X,\QQ)_{\mathrm{prim}}$ via the Kuga--Satake construction; everything that we will say below applies to this more general setting.
	\end{remark}
	
	The Hodge conjecture predicts that the embedding $\mu$ should be the correspondence induced by an algebraic cycle on $X\times \mathrm{KS}(X)\times \mathrm{KS}(X)$. This leads to the following conjecture, which is of central importance in this paper.
	
	\begin{conjecture}[Kuga--Satake--Hodge conjecture]\label{conj:KSHC}
		Let $(X,h)$ be a $2n$-dimensional polarized hyper-K\"ahler variety. Then, there exists an algebraic class $\gamma\in H^{4n-2}(X,\QQ)\otimes H^2(\mathrm{KS}(X)^2,\QQ)\subset H^{4n}(X\times \mathrm{KS}(X)^2, \QQ)$ which induces, as a correspondence, an embedding of rational Hodge structures $$H^2(X,\QQ)_{\mathrm{prim}}\hookrightarrow H^2(\mathrm{KS}(X)^2,\QQ).$$
	\end{conjecture}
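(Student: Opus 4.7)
The plan is to attack Conjecture \ref{conj:KSHC} only for the hyper-K\"ahler sixfolds $\widetilde{K}$ of $\mathrm{OG}6$-type obtained as crepant resolutions of singular $\mathrm{OG}6$-varieties, as asserted by Theorem \ref{thm:HCOG6-resolutions}(ii); the general conjecture remains open and out of reach. My overall strategy combines (i) Theorem \ref{thm:KSsingularOG6}, which identifies $\mathrm{KS}(\widetilde{K})$ with a power of an abelian fourfold $B_K$ of Weil type with discriminant $1$, (ii) the functoriality of the Kuga--Satake construction under Hodge similarities recalled in Remark \ref{rmk:functorialityKS}, which transports algebraic correspondences between polarized K3-type Hodge structures related by similarity, and (iii) the explicit construction of an algebraic cycle coming from a moduli space of sheaves on a K3 surface.

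Concretely, I would first reduce the problem for $\widetilde{K}$ to the same problem for the singular $\mathrm{OG}6$-variety $K$, since the crepant resolution $\widetilde{K}\to K$ induces an isomorphism of rational Hodge structures of K3-type on primitive second cohomology. Next, using the Mongardi--Rapagnetta--Sacc\`a rational double cover $Z_K\dashrightarrow K$, where $Z_K$ is of $\mathrm{K}3^{[3]}$-type and is birational to a moduli space of sheaves on a projective K3 surface $S_K$, one obtains Hodge similarities between the primitive parts of $H^2(S_K,\QQ)$, $H^2(Z_K,\QQ)$ and $H^2(K,\QQ)$, and compatible isogenies of their Kuga--Satake varieties to a power of $B_K$. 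The decisive geometric input is that the ramification locus of $Z_K\dashrightarrow K$ inside $Z_K$ is birational to the singular locus $B_K/\pm 1$ of $K$, and thus defines a rational embedding $B_K/\pm 1\hookrightarrow Z_K$. Composing this embedding with the algebraic correspondence between $S_K$ and $Z_K$ coming from the universal sheaf yields an algebraic cycle on $S_K\times B_K\times B_K$ which I would identify with the Kuga--Satake correspondence for $S_K$; transporting it back through the similarities, via Remark \ref{rmk:functorialityKS}, produces the required algebraic class on $\widetilde{K}\times \mathrm{KS}(\widetilde{K})^2$.

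The main obstacle will be the final identification in step (iii): verifying that the cycle manufactured from $B_K/\pm 1\hookrightarrow Z_K$ and the universal sheaf on $S_K\times Z_K$ actually realizes the Kuga--Satake embedding, rather than a degenerate subquotient or a twisted variant. This requires a careful Hodge-theoretic analysis of the birational involution on $Z_K$, of its ramification divisor, and of their interactions with the primitive cohomologies of $S_K$, $Z_K$ and $K$. It is here that the Weil-type structure on $B_K$ with discriminant $1$ intervenes decisively, and the argument ultimately relies on a reduction (\`a la Varesco \cite{varesco}) to the algebraicity of Hodge classes on powers of abelian fourfolds of Weil type with discriminant $1$ supplied by Theorem \ref{thm:1} and Theorem \ref{thm:1+}.
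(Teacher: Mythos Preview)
Your geometric strategy---the Mongardi--Rapagnetta--Sacc\`a cover $Z_K\dashrightarrow K$, the ramification locus birational to $B_K/\pm 1$, and transport via algebraic correspondences---is exactly the paper's. But you misidentify the main obstacle, and your proposed resolution of it is circular. Conjecture~\ref{conj:KSHC} asks only for \emph{some} algebraic embedding $H^2_{\mathrm{prim}}\hookrightarrow H^2(\mathrm{KS}^2)$, not the specific Kuga--Satake map $\mu$ (that is Conjecture~\ref{conj:refinedKSHC}). So there is no need to ``identify'' the manufactured cycle with $\mu$; one only has to check that the correspondence $\Psi=p_*\circ q^*\circ j^*$ coming from $j\colon \mathrm{Bl}_\Omega(B_K/\pm 1)\hookrightarrow Z_K$ is injective on $H^2_{\mathrm{tr}}$. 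The paper does this in one line: the restriction of the holomorphic symplectic form $\sigma$ to $\mathrm{im}(j)$ is nonzero because $\mathrm{im}(j)$ has dimension $4>3$, and $p_*q^*$ is an isomorphism on holomorphic $2$-forms. No Weil-type analysis, no Varesco reduction, and no appeal to Theorems~\ref{thm:1} or~\ref{thm:1+} enter here.

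Your final sentence, invoking Theorems~\ref{thm:1} and~\ref{thm:1+} as input, reverses the logical flow of the paper: those theorems are \emph{derived from} the Kuga--Satake--Hodge conjecture for $S_K$ (via Varesco's Theorem~\ref{thm:Varesco}), not used to prove it. Using them to close the argument would be circular unless you import Markman's independent proof, which defeats the purpose. The paper's route to Theorem~\ref{thm:HCOG6-resolutions}(ii) for $\widetilde{K}$ is: first prove Conjecture~\ref{conj:KSHC} for $S_K$ directly by the $2$-form argument above (Theorem~4.4), deduce Theorems~\ref{thm:1} and~\ref{thm:1+}, and only then invoke the full Hodge conjecture for $\widetilde{K}\times B_K^m$ (Corollary~5.5) to get algebraicity of the embedding for $\widetilde{K}$. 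Alternatively, and more in the spirit of your plan, one can bypass $S_K$ entirely for $\widetilde{K}$: the graph of the generically $2:1$ rational map $Z_K\dashrightarrow \widetilde{K}$ gives an algebraic correspondence inducing the Hodge similarity on transcendental $H^2$, so composing its transpose with $\Psi$ already yields an algebraic embedding $H^2_{\mathrm{tr}}(\widetilde{K},\QQ)\hookrightarrow H^2(B_K,\QQ)$ directly.
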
 
	
	\begin{remark}\label{rmk:KStr}
		The transcendental cohomology $H^2_{\mathrm{tr}}(X,\QQ)$ of a hyper-K\"ahler variety~$X$ is the orthogonal complement in $H^2(X, \QQ)$ of the entire N\'eron--Severi group with respect to the Beauville--Bogomolov form. By construction, $H^2_{\mathrm{tr}}(X,\QQ)$ is an irreducible Hodge structure of K3-type, and we may consider the Kuga--Satake variety $\mathrm{KS}'(X)$ built from $H^2_{\mathrm{tr}}(X,\QQ)$. Then $\mathrm{KS}(X)$ is isogenous to a power of $\mathrm{KS}'(X)$, and Conjecture \ref{conj:KSHC} is equivalent to the analogous statement for $H^2_{\mathrm{tr}}(X,\QQ)\hookrightarrow H^2(\mathrm{KS}'(X)\times \mathrm{KS}'(X),\QQ)$;  see for example \cite[Chapter 4, \textbf{2.5}]{huyK3}. 
	\end{remark}

	Notice that there is no reference to a specific embedding in Conjecture \ref{conj:KSHC}; nevertheless, the Kuga--Satake construction yields a natural one.
	A more precise version of the Kuga--Satake--Hodge Conjecture \ref{conj:KSHC} is the following. 
	\begin{conjecture}
		\label{conj:refinedKSHC} 
		Keep the assumptions as in Conjecture \ref{conj:KSHC}.
		There exists an algebraic class $\gamma\in H^{4n-2}(X,\QQ) \otimes H^2(\mathrm{KS}(X)^2,\QQ) \subset H^{4n}(X\times \mathrm{KS}(X)^2, \QQ)$ which induces, as a correspondence, the Kuga--Satake embedding 
		\[
		\mu \colon H^2(X,\QQ)_{\mathrm{prim}}\hookrightarrow H^2(\mathrm{KS}(X)^2,\QQ).
		\]
	\end{conjecture}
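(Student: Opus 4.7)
The plan is to deduce \Cref{conj:refinedKSHC} from the weaker \Cref{conj:KSHC} combined with the Hodge conjecture for suitable powers of the Kuga--Satake variety $\mathrm{KS}(X)$. Granting \Cref{conj:KSHC}, we obtain an algebraic class $\gamma_0 \in H^{4n}(X \times \mathrm{KS}(X)^2, \QQ)$ that induces, as a correspondence, \emph{some} embedding of Hodge structures
\[
\phi \colon H^2(X,\QQ)_{\mathrm{prim}} \hookrightarrow H^2(\mathrm{KS}(X)^2, \QQ),
\]
which a priori differs from the canonical Kuga--Satake embedding $\mu$. The task is to correct $\phi$ to $\mu$ using further algebraic cycles.

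To do so, set $V = H^2(X,\QQ)_{\mathrm{prim}}$ and $W = H^2(\mathrm{KS}(X)^2, \QQ)$. The composition $\mu \circ \phi^{-1} \colon \phi(V) \xrightarrow{\sim} \mu(V)$ is an isomorphism of sub-Hodge structures of $W$; extending by zero on orthogonal complements with respect to a polarization of $W$, we obtain a Hodge endomorphism $\psi \in \End_{\mathrm{HS}}(W)$ satisfying $\psi \circ \phi = \mu$. Via the standard identification, $\psi$ is a Hodge class in $H^4(\mathrm{KS}(X)^2 \times \mathrm{KS}(X)^2, \QQ(2))$. If this class is known to be algebraic, then composing the associated correspondence on $\mathrm{KS}(X)^2 \times \mathrm{KS}(X)^2$ with $\gamma_0$ produces the desired cycle $\gamma$ inducing the canonical embedding $\mu$. \Cref{rmk:functorialityKS} provides the underlying functoriality, ensuring that the ambiguity between $\phi$ and $\mu$ is genuinely captured by such an endomorphism of $W$.

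The main obstacle is thus the algebraicity of $\psi$, a Hodge class on a power of the abelian variety $\mathrm{KS}(X)$; in full generality this lies well beyond current techniques. In the setting of this paper, however, it is within reach: by \Cref{thm:KSsingularOG6}, for the varieties of interest $\mathrm{KS}(X)$ is isogenous to a power of an abelian fourfold of Weil type with discriminant $1$, and by \Cref{thm:1+} the Hodge conjecture is known for all of its powers. Consequently, I expect that \Cref{conj:refinedKSHC} holds in exactly the geometric situation where \Cref{conj:KSHC} is being established in this paper, so that the refinement introduces no additional difficulty for the applications announced in the introduction.
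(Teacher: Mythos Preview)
The statement is a conjecture that the paper does not prove in general; immediately after stating it, the authors write that it is not clear whether \Cref{conj:KSHC} implies \Cref{conj:refinedKSHC}. What you give is not a proof either but a correct reduction: \Cref{conj:KSHC} together with the Hodge conjecture for $\mathrm{KS}(X)^4$ implies \Cref{conj:refinedKSHC}, and you rightly note that this hypothesis is available in the paper's setting. (Your appeal to \Cref{rmk:functorialityKS} is not needed; all you use is that $\phi(V)$ admits a Hodge-theoretic orthogonal complement in the polarizable Hodge structure $W$, so that $\mu\circ\phi^{-1}$ extends by zero to a Hodge endomorphism.)

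The paper establishes the special case it needs, \Cref{conj:refinedKSHC} for the K3 surfaces $S_K$, in \Cref{cor:KSforS_K}, with a slightly different implementation: rather than correcting an algebraic $\phi$ by a self-correspondence of $\mathrm{KS}(X)^2$, it shows directly that $[\mu]$ is algebraic by proving the Hodge conjecture for $S_K\times\mathrm{KS}(S_K)^2$, using that \Cref{conj:KSHC} makes $S_K$ motivated by $B_K$ (Example~\ref{example:1}$(iv)$), so that Hodge for powers of $B_K$ suffices. Your route has the mild advantage of needing the Hodge conjecture only on the abelian side. One logical caution, however: you cite \Cref{thm:1+} as input, but in the paper \Cref{thm:1+} is itself \emph{deduced from} \Cref{cor:KSforS_K} via \Cref{thm:HCpowersKS}, so invoking it to prove \Cref{conj:refinedKSHC} for $S_K$ would be circular. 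The paper avoids this by first specializing to very general $K$ and applying Varesco's \Cref{thm:Varesco} (which needs only \Cref{conj:KSHC}) to obtain the Hodge conjecture for powers of $B_K$; your argument should do the same.
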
 
	
	The conjecture does not depend on the choice of $v_0$ in the construction of $\mu$.
	It is not clear to us whether Conjecture \ref{conj:KSHC} implies the more precise version Conjecture~\ref{conj:refinedKSHC} in general. 
	While Conjecture \ref{conj:KSHC} is sufficient for most purposes, in some cases the algebraicity of the specific Kuga--Satake embedding can be useful; see for instance the proof of the following result.
	
	\begin{theorem}[{\cite[Theorem 3.5]{floccari25}}]\label{thm:HCpowersKS}
		Let $(S,h)$ be a polarized K3 surface, and let $\mathrm{KS}(S)$ be the Kuga--Satake variety constructed from $H^2(S,\QQ)_{\mathrm{prim}}$. Assume that the Hodge conjecture holds for all powers of $S$ and that Conjecture \ref{conj:refinedKSHC} holds for $S$. Then, the Hodge conjecture holds for all powers of $\mathrm{KS}(S)$.
	\end{theorem}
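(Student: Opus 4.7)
The plan is to combine the algebraicity of the precise Kuga--Satake correspondence (provided by Conjecture \ref{conj:refinedKSHC}) with a representation-theoretic analysis of Hodge classes on powers of the Kuga--Satake variety, in the spirit of Deligne, Andr\'e and Morrison. Set $A = \mathrm{KS}(S)$ and $V = H^2(S,\QQ)_{\mathrm{prim}}$, so that $H^1(A,\QQ) = C^+(V)$ as Hodge structures; let $\gamma \in \CH^*(S \times A^2)_\QQ$ be an algebraic cycle inducing the Kuga--Satake embedding $\mu \colon V \hookrightarrow H^2(A^2, \QQ)$.

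First I would reduce the Hodge conjecture for all powers of $A$ to algebraicity of Hodge classes in $H^1(A,\QQ)^{\otimes 2m}$ for each $m$, using the K\"unneth decomposition of $H^*(A^n,\QQ)$ and the algebraicity of divisor classes (Lefschetz $(1,1)$). Since $H^1(A,\QQ) = C^+(V)$ carries the natural action of the Mumford--Tate group $\MT(A) \subset \CSpin(V)$ via (a sum of copies of) the spin representation, Hodge classes on $A^n$ are exactly the $\MT(A)$-invariant vectors in the relevant tensor constructions on $C^+(V)$.

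The central structural input is that every $\CSpin(V)$-invariant vector in tensor powers of $C^+(V)$ lies in the subalgebra generated by Clifford multiplication, comultiplication and the bilinear form $q$ on $V$ (which all correspond to divisor classes on powers of $A$ via $\mu$), together with the image of $\mu$ itself viewed as a class in $H^2(A^2,\QQ)$. When $\MT(A)$ is strictly smaller than $\CSpin(V)$ --- i.e.\ when $S$ has extra Hodge classes --- the additional invariants are precisely the $\MT(S)$-invariants in the tensor constructions built from $V$, and hence correspond, via $\mu$, to Hodge classes on powers of $S$. Applying the cycle $\gamma$ and its external tensor powers $\gamma^{\otimes m}$, together with the functoriality statement of Remark \ref{rmk:functorialityKS}, one transports algebraic cycles on powers of $S$ (provided by the assumed Hodge conjecture for $S^m$) to algebraic cycles on powers of $A$, obtaining the desired algebraicity.

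The main difficulty lies in this last step: controlling the gap between the generic case, where $\MT(A) = \CSpin(V)$, and the case of special K3 surfaces, where extra Hodge classes on powers of $A$ must be tracked explicitly. It is precisely here that the refined Conjecture \ref{conj:refinedKSHC} --- asserting that $\gamma$ induces \emph{the} Kuga--Satake embedding, and not merely some embedding of $V$ into $H^2(A^2,\QQ)$ --- is essential: only with an honest Kuga--Satake cycle do the compatibilities of $\mu$ under Hodge similarities lift to the level of correspondences, so that $\MT(S)$-invariants on the $V$-side can be matched with $\MT(A)$-invariants on the $A$-side through explicit algebraic cycles. With this matching in hand, algebraicity propagates from $S^m$ to $A^{2m}$ and the theorem follows.
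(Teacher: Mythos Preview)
The paper does not contain a proof of this theorem: it is quoted from \cite{floccari25} and used as a black box. There is therefore no ``paper's own proof'' to compare your attempt against. What remains is to assess whether your sketch could stand on its own.

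Your general strategy is the right one --- analyze Hodge classes on powers of $A=\mathrm{KS}(S)$ as $\MT(A)$-invariants in tensor powers of $C^+(V)$, use that $\MT(A)\subset\CSpin(V)$ surjects onto $\MT(V)$, and transport algebraic cycles from powers of $S$ via the algebraic Kuga--Satake class $\gamma$. But several of your intermediate assertions are imprecise or unjustified:
\begin{itemize}
\item Your appeal to ``Clifford multiplication'' and ``comultiplication'' as classes corresponding to divisors on powers of $A$ is not correct as stated: the multiplication $C^+(V)\otimes C^+(V)\to C^+(V)$ is not a morphism of Hodge structures (left multiplication does not commute with the Hodge cocharacter), so it does not define a Hodge class, let alone a divisor class.
\item The crucial structural fact you need, but do not state, is that \emph{even} tensor powers of the spin representation of $\CSpin(V)$ factor through $\mathrm{SO}(V)$ --- concretely, $\End(C^+(V))$ decomposes as a $\Spin(V)$-module into a direct sum of exterior powers $\Lambda^kV$. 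This is what forces $\MT(A)$-invariants in $(C^+(V))^{\otimes 2m}$ to coincide with $\MT(V)$-invariants in an explicit tensor construction on $V$, and hence with Hodge classes on powers of $S$.
\item Even granting this identification at the level of Hodge structures, you must show it is induced by \emph{algebraic} correspondences. This requires more than the algebraicity of $\mu$ alone: one also uses that right multiplication by $C^+(V)$ on $H^1(A,\QQ)$ is realized by genuine endomorphisms of $A$ (hence by algebraic cycles), and one must combine these with $\gamma^{\otimes m}$ carefully to produce the needed algebraic splitting of $\End(C^+(V))^{\otimes m}$ into pieces coming from $S^k$'s.
\end{itemize}
Your final paragraph correctly identifies that the refined Conjecture~\ref{conj:refinedKSHC} (algebraicity of the \emph{specific} embedding $\mu$) is essential here, and this matches the paper's own remark preceding the theorem. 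But as written, the proposal is an outline with the hard steps asserted rather than proved; filling them in is essentially the content of \cite[Theorem~3.5]{floccari25}.
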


	Determining the Kuga--Satake variety of a polarized hyper-K\"ahler variety $X$ is challenging in general. Descriptions of $\mathrm{KS}(X)$ in terms of the geometry of $X$ are available only in a few cases, for example, for abelian surfaces and Kummer K3 surfaces by Morrison \cite{morrison1985}, for double covers of $\PP^2$ ramified along six lines by Paranjape \cite{paranjape}, and for hyper-K\"ahler varieties of generalized Kummer type by Markman \cite{markman2019monodromy} and O'Grady~\cite{O'G21}. 
	
	The following result of Lombardo \cite{Lombardo} allows one to study abelian fourfolds of Weil type with discriminant 1 as isogeny factors of Kuga--Satake varieties (see also \cite[Theorem 9.2]{vanGeemen} and \cite[Theorem 4.1]{varesco}). 
	
	\begin{theorem}[Lombardo]\label{thm:Lombardo}
		Let $d\in \ZZ_{>0}$. Let $B$ be an abelian fourfold of $\QQ(\sqrt{-d})$-Weil type with discriminant 1. Then $B^4$ is isogenous to the Kuga--Satake variety of a 6-dimensional polarized Hodge structure of $\mathrm{K}3$-type $(V, q)$ such that
		\begin{equation*}
			\label{eqn:LombardoForm}
			(V, q)\cong \mathrm{U}^{\oplus 2}_{\QQ} \oplus \langle a \rangle \oplus \langle b \rangle,
		\end{equation*}
		with $a, b\in \ZZ_{<0}$, where $\mathrm{U}$ denotes a hyperbolic plane.
		
		Conversely, given a polarized Hodge structure of $\mathrm{K}3$-type $(V, q)$ with an isomorphism of quadratic spaces as above, the Kuga--Satake variety $\mathrm{KS}(V)$ is isogenous to $B^4$ for some abelian fourfold $B$ of $\QQ(\sqrt{-ab})$-Weil type with discriminant 1.
	\end{theorem}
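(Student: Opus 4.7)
The plan is to prove both directions by tracing through the Clifford algebra construction and exploiting the fact that $\mathrm{U}^{\oplus 2}$ appears as an orthogonal summand of $V$.

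For the forward direction, the starting point is the structure of the even Clifford algebra $C^{+}(V)$. Its center is $F := \QQ(\sqrt{-\det(q)}) = \QQ(\sqrt{-ab})$, and the classical analysis of Brauer classes of even Clifford algebras shows that a $6$-dimensional form of Witt index at least $2$ has $C^{+}(V) \cong M_{4}(F)$. Since $H^{1}(\mathrm{KS}(V), \QQ) = C^{+}(V)$ as a right $C^{+}(V)$-module, it decomposes as four copies of an $8$-dimensional simple right ideal, producing an isogeny $\mathrm{KS}(V) \sim B^{4}$ with $\dim B = 4$ and $F \hookrightarrow \End^{0}(B)$.

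I then verify the two properties of $B$. For the Weil-type condition, the Hodge structure on $C^{+}(V_{\RR})$ is induced by left multiplication by $J = e_{+}f_{+}$, where $e_{+}, f_{+}$ is a normalized basis of the positive $2$-plane $(V^{2,0}\oplus V^{0,2})\cap V_{\RR}$. The central element $z \in C^{+}(V)$ satisfying $z^{2} = -ab$ gives the $F$-action, and a direct eigenvalue computation on the $-i$-eigenspace of $J$ (which is $T_{0}B$) yields the $(2,2)$ splitting that characterizes $F$-Weil type. For the discriminant $1$ condition, one identifies the Weil discriminant invariant in $\QQ^{*}/\mathrm{Nm}(F^{*})$ with the Hasse invariant of $(V, q)$; the presence of two hyperbolic planes forces this invariant to be trivial, as required.

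For the converse, starting with $A$ of $\QQ(\sqrt{-d})$-Weil type with discriminant $1$, I would construct $(V, q)$ as a natural $6$-dimensional K3-type sub-Hodge structure of $\Lambda^{2}_{F} H^{1}(A, \QQ)$, equipped with the $\QQ$-valued bilinear form $\mathrm{tr}_{F/\QQ}(\alpha \wedge \beta)$ under the identification $\Lambda^{4}_{F} H^{1}(A, \QQ) \cong F$. The Weil-type condition forces the Hodge numbers to be of K3 shape with $V^{2,0}$ one-dimensional, while the hypothesis of discriminant $1$ translates, via direct comparison of invariants over $\QQ$, into the isomorphism $(V, q) \cong \mathrm{U}^{\oplus 2} \oplus \langle a \rangle \oplus \langle b \rangle$ with $a, b \in \ZZ_{<0}$. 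Running the forward direction then gives $\mathrm{KS}(V) \sim A^{4}$, after matching the Mumford–Tate data on both sides.

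The main obstacle in both directions is the precise matching of quadratic-form invariants: the Weil discriminant of $B$ (or $A$) must be shown to correspond exactly to the class of $V$ containing $\mathrm{U}^{\oplus 2}$ in the Witt group of $\QQ$. This is a careful but essentially formal computation involving Hasse invariants and norms from $F$ to $\QQ$, made tractable by the strong constraint of the signature $(2,4)$ setting in dimension $6$. The remaining input is standard: the description of $\mathrm{KS}$-varieties up to isogeny via Morita equivalence over the center of $C^{+}(V)$, and the Hodge-theoretic characterization of Weil-type abelian fourfolds in terms of the action of $F$ on the tangent space.
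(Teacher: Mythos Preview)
The paper does not give its own proof of this statement. Theorem~\ref{thm:Lombardo} is quoted as a result of Lombardo, with pointers to van Geemen and Varesco for alternative treatments; no argument is supplied in the text. So there is nothing in the paper to compare your proposal against.

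That said, your sketch follows the standard route taken in the cited references and is broadly sound: the identification of the center of $C^{+}(V)$ with $\QQ(\sqrt{-ab})$, the Morita decomposition $C^{+}(V)\cong M_{4}(F)$ forced by the two hyperbolic summands, and the construction of $V$ inside $\Lambda^{2}_{F} H^{1}(A,\QQ)$ in the converse direction are exactly the ingredients used by Lombardo and van Geemen. Two places deserve more care if you intend this as a self-contained proof. First, the identification of the Weil discriminant with a quadratic-form invariant is not literally the Hasse invariant of $(V,q)$; it is the Brauer class of $C^{+}(V)$ over its center $F$, and one must check that this class is trivial precisely when the two hyperbolic planes split off---this is correct, but the wording ``Hasse invariant'' is misleading. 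Second, in the converse direction you assert that running the forward construction on the $V$ built from $A$ returns $A$ up to isogeny; this is not automatic and is where the matching of Mumford--Tate data (or, equivalently, a uniqueness statement for the simple isogeny factor of $\mathrm{KS}(V)$) is genuinely needed. Both points are handled in the references the paper cites, so your outline is consistent with how the result is actually proved in the literature.
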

	
	By the Torelli theorem for K3 surfaces, any $6$-dimensional Hodge structure $(V,q)$ as in Theorem \ref{thm:Lombardo} arises as direct summand of the Hodge structure of a projective K3 surface (\cite[Corollary 2.10]{Morrison1984}). A crucial tool that we use in this paper is the following theorem of Varesco \cite[Proposition 0.4]{varesco}.
	
	\begin{theorem}[Varesco]\label{thm:Varesco}
		Let $S$ be a very general $\mathrm{K}3$ surface of Picard number 16 with transcendental cohomology $H^2_{\mathrm{tr}}(S, \QQ)$ isometric to $\mathrm{U}^{\oplus 2}_{\QQ} \oplus \langle a \rangle \oplus \langle b \rangle$, with $a, b\in \ZZ_{<0}$. Let $B$ be the simple factor of $\mathrm{KS}(S)$ as in Theorem \ref{thm:Lombardo}. If the Kuga--Satake--Hodge Conjecture \ref{conj:KSHC} holds for $S$, then the Hodge conjecture holds for all powers of $B$.    
	\end{theorem}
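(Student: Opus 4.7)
The plan is to combine \Cref{thm:Lombardo} with \Cref{thm:HCpowersKS}. By \Cref{thm:Lombardo}, $\mathrm{KS}(S)$ is isogenous to $A^4$, so the Hodge conjecture for all powers of $A$ is equivalent to the Hodge conjecture for all powers of $\mathrm{KS}(S)$. It therefore suffices to verify the two hypotheses of \Cref{thm:HCpowersKS} for $S$: the Hodge conjecture for all powers of $S$, and the refined Kuga--Satake--Hodge \Cref{conj:refinedKSHC}.

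For \Cref{conj:refinedKSHC}, I would exploit the genericity of $S$. The very generality of $S$ forces the Mumford--Tate group of $H^2_{\mathrm{tr}}(S,\QQ)$ to be the full special orthogonal group of the six-dimensional quadratic space $\mathrm{U}^{\oplus 2}_{\QQ}\oplus \langle a\rangle \oplus \langle b\rangle$. Via \Cref{rmk:functorialityKS}, any Hodge embedding of $H^2_{\mathrm{tr}}(S,\QQ)$ into $H^2(\mathrm{KS}(S)^2,\QQ)$ differs from the canonical Kuga--Satake embedding $\mu$ by post-composition with a map induced by a Hodge similarity of $\mathrm{KS}(S)$. Since such maps act on cohomology through algebraic correspondences (they come from the action of the even Clifford algebra on the abelian variety $\mathrm{KS}(S)$), the assumed algebraicity of some embedding (\Cref{conj:KSHC}) upgrades to algebraicity of $\mu$ itself (\Cref{conj:refinedKSHC}).

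It then remains to prove the Hodge conjecture for all powers of $S$, and here I would appeal to André's theory of motivated cycles. Since $S$ has maximal Mumford--Tate group, every Hodge class on $S^n$ is motivated, and the algebra of motivated cycles on $S^n$ is generated by divisor classes of $S$, by the Chow--K\"unneth projectors of $S$, and by the Kuga--Satake correspondence class. Divisor classes are algebraic by Lefschetz $(1,1)$; the Chow--K\"unneth projectors of a K3 surface are algebraic via Murre's construction; and the Kuga--Satake correspondence class is algebraic by the assumed \Cref{conj:KSHC}. Hence every Hodge class on $S^n$ is algebraic. With HC for powers of $S$ and \Cref{conj:refinedKSHC} for $S$ both established, \Cref{thm:HCpowersKS} yields HC for all powers of $\mathrm{KS}(S)\sim A^4$, and therefore for all powers of $A$. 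I expect the main obstacle to be justifying the generation statement for motivated cycles on $S^n$: this requires working carefully within André's framework and using the maximality of the Mumford--Tate group to express arbitrary Hodge classes as polynomial combinations of the three basic families above, so that algebraicity of the single Kuga--Satake class $\mu$ suffices to deduce algebraicity on all powers.
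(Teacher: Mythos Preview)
The paper does not give its own proof of this theorem: it is quoted from Varesco \cite[Proposition 0.4]{varesco} as an external input, so there is no argument in the paper to compare against. What can be assessed is whether your route --- deducing \Cref{thm:Varesco} from \Cref{thm:HCpowersKS} --- actually closes.

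The strategy is reasonable, but both bridging steps have genuine gaps. For the Hodge conjecture on powers of $S$, your generation claim is not well-posed: the Kuga--Satake class lives on $S\times \mathrm{KS}(S)^2$, not on $S^n$, so it cannot literally be one of the generators of motivated cycles on $S^n$. What you actually need is that, for very general $S$ with Mumford--Tate group $\mathrm{SO}(V)$ ($V=H^2_{\mathrm{tr}}(S,\QQ)$, $\dim V=6$), the $\mathrm{SO}(V)$-invariants in $V^{\otimes n}$ are generated over the form $q$ (which is algebraic, being a K\"unneth component of the diagonal) by the volume form $\omega\in\bigwedge^6 V\subset V^{\otimes 6}$, and that $\omega$ becomes algebraic once the Kuga--Satake embedding is. Establishing this last implication is exactly the non-trivial content of Varesco's work (his Theorem~0.2, invoked separately in the paper in \Cref{cor:HCpowersK3}); it does not fall out of Andr\'e's formalism together with algebraicity of a single embedding class.

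For the upgrade from \Cref{conj:KSHC} to \Cref{conj:refinedKSHC}, your assertion that an arbitrary Hodge embedding $\psi\colon V\hookrightarrow H^2(\mathrm{KS}(S)^2,\QQ)$ differs from $\mu$ by the action of $C^+(V)$ on $\mathrm{KS}(S)$ is not justified: you would need to check that the $C^+(V)$-orbit of $\mu$ spans all of $\Hom_{\mathrm{HS}}(V, H^2(\mathrm{KS}(S)^2,\QQ))$, which is a representation-theoretic computation you have not carried out. Note also that the paper's own upgrade to \Cref{conj:refinedKSHC} (in \Cref{cor:KSforS_K}) goes in the opposite direction --- it \emph{uses} \Cref{thm:Varesco} to first get the Hodge conjecture on $S_K\times \mathrm{KS}(S_K)^2$ and then deduces algebraicity of $\mu$ --- so one cannot borrow that argument here without circularity.
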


	\section{Singular OG6-varieties and their Kuga--Satake varieties}

	Let $A$ be an abelian surface. The Mukai lattice of $A$ is the even cohomology $H^{2\bullet}(A,\ZZ)$ equipped with the pairing $$((a,b,c), (a',b',c'))=(b,b')-ac'-a'c,$$
	where $a,a'\in H^0(A, \ZZ)$, $b,b'\in H^2(A, \ZZ)$ and $c, c'\in H^4(A, \ZZ)$.
	
	Given an algebraic Mukai vector $\mathsf v\in H^{2\bullet}(A,\ZZ)$ and a polarization $H$ on $A$, let $M_{A}(\mathsf{v})$ be the moduli space of $H$-semistable coherent sheaves on $A$ with Chern character $\mathsf{v}$ (see \cite{huybrechts2010geometry} for a reference); we will always assume that the polarization $H$ is $\mathsf{v}$-generic and omit it from the notation. 
	If not empty, $M_{A}(\mathsf{v})$ is a projective variety of dimension $\mathsf{v}^2+2$ and its smooth locus carries a symplectic form (\cite{Muk84}). Assume that $\mathsf{v}^2\geq 6$. The Albanese map is an isotrivial fibration $M_{A}(\mathsf{v})\to A\times \hat{A}$; we let $K_{A}(\mathsf{v})$ denote the fibre. 
	If $\mathsf{v}$ is primitive, then $M_{A}(\mathsf{v})$ is smooth, and $K_{A}(\mathsf{v})$ is a hyper-K\"ahler variety of generalized Kummer type of dimension $2n=\mathsf{v}^2-2$ (\cite{Yos01}).
	If $\mathsf{v}$ is not primitive, then $K_{A}(\mathsf{v})$ is singular and does not admit a crepant resolution in most cases (\cite{KLS06}).
	
	Consider the Mukai vector $(2,0,-2)\in H^{2\bullet}(A,\ZZ)$. In \cite{O'G99, O'G03}, O'Grady constructed a crepant resolution $\widetilde{K}_{A}(2,0,-2)\to {K}_{A}(2,0,-2)$, which gives a $6$-dimensional smooth hyper-K\"ahler variety that is not deformation equivalent to a Hilbert scheme of points on a K3 surface or to a generalized Kummer variety. Lehn and Sorger \cite{LS06} showed that O'Grady's resolution coincides with the blow-up of $K_{A}(2, 0,-2)$ along its singular locus. In fact, any (non-empty) moduli space of sheaves $K_{A}(\mathsf{v})$ with $\mathsf{v}=2\mathsf{v}_0$ for a primitive algebraic Mukai vector $\mathsf{v}_0$ of square $2$ admits a crepant resolution $\widetilde{K}_{A}(\mathsf{v})$ by a hyper-K\"ahler variety deformation equivalent to O'Grady's example (\cite{PR13}).
	
	A \textit{hyper-K\"ahler manifold of $\mathrm{OG}6$-type} is by definition a compact hyper-K\"ahler manifold which is deformation equivalent to O'Grady's crepant resolution $\widetilde{K}_A(2,0,-2)$; these manifolds form a holomorphic family of dimension $6$. We shall be concerned with a subfamily of codimension $1$, the family of OG6-resolutions.
	
	\begin{definition}[Singular OG6-varieties]\label{def:SingularOG6}
		A \emph{singular $\mathrm{OG}6$-variety} $K$ is a compact K\"ahler complex analytic space which is a locally trivial deformation of $K_A(2,0,-2)$. Here, locally trivial deformation is as in the sense of  \cite{FlennerKosarew}. 
		An \emph{$\mathrm{OG}6$-resolution}  is a compact hyper-K\"ahler manifold isomorphic to the crepant resolution $\widetilde{K}$ of a singular $\mathrm{OG}6$-variety $K$ obtained by blowing up its singular locus. 
	\end{definition}
	
	The theory of locally trivial deformations of singular hyper-K\"ahler varieties admitting a crepant resolution has been developed in  \cite{BakkerLehn2021}, and it is entirely analogous to that of hyper-K\"ahler manifolds. The OG6-resolutions are exactly the deformations of $\widetilde{K}_A(2,0,-2)$ on which the parallel transport of the cohomology class of the exceptional divisor of $\widetilde{K}_A(2,0,-2)\to K_A(2,0,-2)$ remains algebraic; thus, OG6-resolutions form a~$5$-dimensional holomorphic subfamily among all hyper-K\"ahler manifolds of OG6-type. Projective OG6-resolutions come in $4$-dimensional families; as abelian surfaces only give $3$-dimensional families, the very general projective OG6-resolution cannot be realized from a singular moduli space of sheaves on an abelian surface.
	
	\begin{remark}\label{rmk:1}
		The pull-back along the resolution $\nu\colon \widetilde{K}\to K$ induces an embedding $\nu^*\colon H^2(K,\ZZ)\hookrightarrow H^2(\widetilde{K},\ZZ)$, by \cite[Theorem 1.7]{PR13}. The cohomology class of the exceptional divisor $E$ of $\nu$ is divisible by $2$, and we have $H^2(\widetilde{K},\ZZ)=\nu^*(H^2(K,\ZZ))\oplus \ZZ\cdot \tfrac{1}{2}[E]$. In particular, $H^2(K,\ZZ)$ carries a pure weight-$2$ Hodge structure of K3-type.
		We equip $H^2(K,\ZZ)$ with the pairing induced by the restriction of the Beauville--Bogomolov form on $H^2(\widetilde{K},\ZZ)$. As a lattice, $H^2(K,\ZZ)$ (resp. $H^2(\widetilde{K},\ZZ)$) is isometric to $\mathrm{U}^{\oplus 3}\oplus \langle -2\rangle$ (resp. to $\mathrm{U}^{\oplus 3}\oplus \langle -2\rangle \oplus \langle -2\rangle$), see \cite{rapagnetta2007topological}.
	\end{remark}
	
	We will study the Kuga--Satake varieties of singular OG6-varieties and their resolutions. Let $(K,h)$ be a quasi-polarized singular OG6-variety. We write $H^2(K,\ZZ)_{\mathrm{prim}}$ for the primitive cohomology (Remark \ref{rmk:quasi-polarized}); if $\nu\colon \widetilde{K}\to K$ is the OG6-resolution of $K$, then $\nu^*(H^2(K,\ZZ)_{\mathrm{prim}})$ is the orthogonal complement of the $2$-dimensional sublattice $\langle \nu^*(h), [E]\rangle$ of $H^2(\widetilde{K},\ZZ)$.
	
	\begin{remark}\label{rmk:quadratic_space}
		Over the rational numbers, $H^2(K,\QQ)_{\mathrm{prim}}$ is isometric to the quadratic space $\mathrm{U}^{\oplus 2}_{\mathbb{Q}} \oplus \langle -2\rangle \oplus \langle -2d\rangle$, where $2d $ is the Beauville--Bogomolov degree of $h$.
	\end{remark}
	
	The following theorem shows that singular OG6-varieties are naturally related to abelian fourfolds of Weil type with discriminant $1$.
	
	\begin{theorem} \label{thm:2}
		Let $(K, h)$ be a quasi-polarized singular $\mathrm{OG}6$-variety, with $h$ of Beauville--Bogomolov square $2d$. 
		\begin{enumerate}[label=(\roman*)]
			\item The singular locus $\Sigma$ of $K$ is isomorphic to $B_K /\pm 1$ for an abelian fourfold $B_K$ of $\QQ(\sqrt{-d})$-Weil type with discriminant $1$.
			\item The Kuga--Satake variety $\mathrm{KS}(H^2(K,\QQ)_{\mathrm{prim}})$ of $K$ is isogenous to $B_K^{ 4}$.
		\end{enumerate}
	\end{theorem}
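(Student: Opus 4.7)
The plan is to handle part (i) first, reducing to the base case $K = K_A(2,0,-2)$ and then extending via locally trivial deformations, and to conclude part (ii) by combining the description of the singular locus with Lombardo's theorem.

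For part (i) in the base case $K = K_A(2,0,-2)$: the singular locus of the moduli space $M_A(2,0,-2)$ parametrizes S-equivalence classes of strictly polystable sheaves $F_1 \oplus F_2$ with each $F_i$ stable of Mukai vector $(1,0,-1)$; since $M_A(1,0,-1) \cong A \times \hat A$, restricting to the Albanese fibre imposes $F_1 + F_2 = 0$ in $A \times \hat A$, identifying the singular locus of $K_A(2,0,-2)$ with $(A \times \hat A)/\pm 1$. For a general polarized singular $\mathrm{OG}6$-variety $(K, h)$, I would invoke the Bakker--Lehn theory of locally trivial deformations of singular symplectic varieties admitting a crepant resolution: the singular locus deforms locally trivially along with $K$, so $\mathrm{Sing}(K) \cong B_K/\pm 1$ for a $4$-dimensional complex torus $B_K$, and the restriction of $h$ polarizes $B_K$, making it an abelian fourfold.

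To determine the Weil type and discriminant of $B_K$ and to prove part (ii), I would bring in Lombardo's theorem. The primitive quadratic space $H^2(K,\QQ)_{\mathrm{prim}}$ is isometric to $\mathrm{U}_{\QQ}^{\oplus 2} \oplus \langle -2\rangle \oplus \langle -2d\rangle$, so Lombardo's theorem yields an abelian fourfold $B$ of $\QQ(\sqrt{-d})$-Weil type with discriminant $1$ such that $\mathrm{KS}(H^2(K,\QQ)_{\mathrm{prim}})$ is isogenous to $B^4$. It remains to verify that $B$ is isogenous to $B_K$; once established, this simultaneously yields part (ii) and the Weil-type statement in part (i).

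I would establish this isogeny first in the base case $K = K_A(2,0,-2)$: using Rapagnetta's explicit description of $H^2(K_A(2,0,-2),\QQ)$ in terms of the Mukai lattice of $A$, one computes $\mathrm{KS}(H^2(K_A(2,0,-2),\QQ)_{\mathrm{prim}})$ directly and exhibits it as isogenous to $(A \times \hat A)^4 = B_K^4$. For a general polarized singular $\mathrm{OG}6$-variety, both $B_K$ and Lombardo's $B$ are abelian fourfolds whose isogeny class is determined by the period Hodge structure of $K$; since they coincide on the subfamily of moduli-space examples and both depend continuously on the period point via polarizable variations of Hodge structure, they coincide throughout by the functoriality of the Kuga--Satake construction (\Cref{rmk:functorialityKS}). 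The main obstacle is the base-case computation: one must produce an explicit Hodge similarity between $H^2(K_A(2,0,-2),\QQ)_{\mathrm{prim}}$ and a Hodge structure naturally built from $A \times \hat A$, and then identify the resulting Clifford construction with $(A \times \hat A)^4$. This parallels the analysis of Markman and O'Grady for generalized Kummer varieties, but is complicated by the singularities of $K$ and by the extra exceptional class $\tfrac12[E]$ appearing in the cohomology of the crepant resolution.
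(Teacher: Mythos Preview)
Your proposal has a genuine gap in the extension step. You argue that $B_K$ and Lombardo's abelian fourfold $B$ coincide on the subfamily of moduli-space examples $K_A(2,0,-2)$ and then ``coincide throughout'' by continuity. But, as the paper itself notes, abelian surfaces give only $3$-dimensional families, whereas polarized singular $\mathrm{OG}6$-varieties of a fixed degree form $4$-dimensional families; the moduli-space examples are a proper subvariety, not a dense subset. Two families of abelian varieties over a connected base that are fibrewise isogenous over a divisor need not be fibrewise isogenous generically: an isogeny is a Hodge class in a local system on the total space, and there is no mechanism forcing such a class to extend off a divisor. Your assertion that ``the isogeny class of $B_K$ is determined by the period Hodge structure of $K$'' is exactly what needs to be proved; a priori $B_K$ is determined by the geometry of $K$, not by $H^2(K,\QQ)_{\mathrm{prim}}$ alone.

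The paper closes this gap by a completely different device: it constructs, for \emph{every} singular $\mathrm{OG}6$-variety $K$, an embedding of Hodge structures
\[
\psi\colon H^2(K,\QQ)_{\mathrm{prim}}\hookrightarrow H^2(B_K,\QQ)
\]
via restriction to the singular locus (the key point being that the symplectic form of $K$ restricts non-trivially to $B_K/\pm 1$). Once this is in hand, the paper reduces to \emph{very general} $(K,h)$ --- a legitimate reduction, since such $K$ are dense --- and applies the universal property of the Kuga--Satake construction (van~Geemen--Voisin, Charles): the existence of $\psi$ forces $B_K$ to be isogenous to a subquotient of $\mathrm{KS}(H^2(K,\QQ)_{\mathrm{prim}})\sim C^4$. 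Since $C$ is simple for very general $K$ and $\dim B_K=4$, one gets $B_K\sim C$. This route also bypasses entirely the base-case Clifford computation you flag as the ``main obstacle''.
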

	\begin{proof}
		The singular locus of O'Grady's moduli space $K_{A}(2,0,-2)$ parametrizes poly-stable sheaves of the form $E\oplus E'\in K_{A}(2,0,-2)$ with $E, E'\in M_{A}(1,0,-1)$, hence is isomorphic to the Albanese fiber of $\operatorname{Sym}^2(M_{A}(1,0,-1))$. As $M_{A}(1,0,-1)\simeq A\times \hat{A}$, the singular locus of $K_{A}(2,0,-2)$ is isomorphic to $(A\times \hat{A})/\pm 1$; see \cite{O'G03}. Thus, by construction, the singular locus of a (not necessarily projective) singular OG6-variety $K$ is a locally trivial deformation of $(A\times\hat{A})/\pm 1$. But, since $-1$ acts trivially on $H^2(A\times \hat{A},\ZZ)$, any such deformation is of the form $B/\pm 1$ for some complex torus~$B$ of dimension $4$ (see \cite[Corollary 3.6]{fujiki1983}). Hence, the singular locus of $K$ is isomorphic to $B_K/\pm 1$ for some complex torus $B_K$ of dimension $4$; if $K$ is projective, then $B_K$ is an abelian fourfold. 
		
		Next, for any singular OG6-variety $K$, the restriction $H^2(K,\QQ)\to H^2(\Sigma, \QQ)$ is injective, as was already observed in \cite{bertiniGiovenzana}. The main point is that the restriction of the holomorphic $2$-form of $K$ to its singular locus $\Sigma$ is non-zero. Hence, as for a very general (non-projective) singular OG6-variety $K$ the Hodge structure $H^2(K,\QQ)$ is irreducible, the restriction  $H^2(K,\QQ)\to H^2(\Sigma,\QQ)$ must be an embedding of Hodge structures for very general $K$. Since the injectivity of the induced map in cohomology is a topological property, therefore invariant under locally trivial deformations, we conclude that the restriction map is injective for any singular OG6-variety $K$. There is an isomorphism of Hodge structures $H^2(B_K/\pm 1, \QQ)\cong H^2(B_K,\QQ)$; by the above, we thus obtain an embedding of Hodge structures $$\psi\colon H^2(K,\QQ)\hookrightarrow H^2(B_K,\QQ).$$
		
		Assume now that $(K,h)$ is quasi-polarized of degree $2d$. We adapt arguments of O'Grady from \cite{O'G21} to complete the proof. It suffices to prove $(i)$ and $(ii)$ for a very general polarized singular OG6-variety $(K,h)$. In this case, $H^2({K},\QQ)_{\mathrm{prim}}$ is an irreducible Hodge structure, and, as a quadratic space, it is isometric to $\mathrm{U}_{\QQ}^{\oplus 2}\oplus \langle -2\rangle \oplus \langle -2d\rangle$, where $2d$ is the Beauville--Bogomolov degree of $h$ (Remark \ref{rmk:quadratic_space}). By Lombardo's result (Theorem \ref{thm:Lombardo}), the Kuga--Satake variety built from $H^2(K,\QQ)_{\mathrm{prim}}$ is isogenous to $C^{4}$, where $C$ is a simple abelian fourfold of $\QQ(\sqrt{-d})$-Weil type with discriminant $1$. By the universal property of the Kuga--Satake construction established by van Geemen--Voisin \cite{vanGeemenVoisin} and Charles \cite{charles2022}, the existence of the embedding of Hodge structures $\psi$ implies that $B_K$ is isogenous to a subquotient of the Kuga--Satake variety $C^{4}$. But since $C$ is simple and $B_K$ has dimension $4$, we conclude that $B_K $ is isogenous to $C$.
	\end{proof}
	
	Given a family $\mathcal{K}\to T$ of quasi-polarized singular OG6-varieties, we obtain a family $\mathcal{B}\to T$ of abelian fourfolds of Weil type with discriminant $1$, such that the singular locus of the fibre $\mathcal{K}_t$ is isomorphic to $\mathcal{B}_t/\pm 1$.
	Together with the next proposition, Theorem \ref{thm:2} implies Theorem \ref{thm:KSsingularOG6} from the introduction.
	
	\begin{proposition}\label{prop:completeFamilies}
		For $(K,h)$ varying in a complete family $\mathcal{K}\to T$ of quasi-polarized singular $\mathrm{OG}6$-varieties of degree $2d$, the family $\mathcal{B}\to T$ of abelian fourfolds of $\QQ(\sqrt{-d})$-Weil type with discriminant $1$ is complete up to isogeny.
	\end{proposition}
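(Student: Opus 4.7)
The plan is to reduce the claim to the following assertion: for every abelian fourfold $A$ of $\QQ(\sqrt{-d})$-Weil type with discriminant $1$, some quasi-polarized singular $\mathrm{OG}6$-variety $(K, h)$ of degree $2d$ has $B_K$ isogenous to $A$. Once this is established, the completeness of $\mathcal{K} \to T$ ensures that $(K, h)$ occurs as a fibre $\mathcal{K}_t$, and then $\mathcal{B}_t = B_K \sim A$, yielding the completeness of $\mathcal{B} \to T$ up to isogeny.

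First I would check that both moduli spaces have dimension $4$. By \Cref{rmk:quadratic_space} together with the global Torelli theorem of Bakker--Lehn for locally trivial deformations of singular hyper-K\"ahler varieties, the moduli of quasi-polarized singular $\mathrm{OG}6$-varieties of degree $2d$ is $4$-dimensional, with surjective period map onto the type-IV period domain attached to the rational quadratic space $(V, q) := \mathrm{U}^{\oplus 2}_{\QQ} \oplus \langle -2 \rangle \oplus \langle -2d \rangle$ of signature $(2, 4)$. On the other hand, the moduli of abelian fourfolds of $\QQ(\sqrt{-d})$-Weil type with discriminant $1$ up to isogeny has dimension $n^2 = 4$, as recalled in the introduction.

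Next, given $A$, I would apply Lombardo's \Cref{thm:Lombardo} to obtain a polarized K3-type Hodge structure $(V', q') \cong \mathrm{U}^{\oplus 2}_{\QQ} \oplus \langle a \rangle \oplus \langle b \rangle$, with $a, b \in \ZZ_{<0}$, whose Kuga--Satake variety is isogenous to $A^4$. From the converse direction of the same theorem and the simplicity of $A$, one deduces that $\QQ(\sqrt{-ab}) = \QQ(\sqrt{-d})$, so $ab \in d \cdot (\QQ^*)^2$. Rescaling the form $q'$ by $k := -2/a$ yields an isometry $(V', k q') \cong (V, q)$: indeed, $k \mathrm{U} \cong \mathrm{U}$ over $\QQ$, $\langle ka \rangle = \langle -2 \rangle$, and $kb/(-2d) = (s/a)^2$ is a square with $s^2 = ab/d$, hence $\langle kb \rangle \cong \langle -2d \rangle$. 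By the functoriality of the Kuga--Satake construction under Hodge similarities (\Cref{rmk:functorialityKS}), this rescaling transports the Hodge decomposition onto $(V, q)$ without changing the Kuga--Satake variety up to isogeny. The surjectivity of the period map then yields a quasi-polarized singular $\mathrm{OG}6$-variety $(K, h)$ of degree $2d$ with $H^2(K, \QQ)_{\mathrm{prim}} \cong (V, q)$ as polarized Hodge structures; by \Cref{thm:2}(ii), $B_K^4$ is isogenous to $A^4$, and hence $B_K \sim A$ since $A$ is simple.

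The main obstacle is matching the specific quadratic form $(V, q)$ of \Cref{rmk:quadratic_space} with the form $(V', q')$ produced by Lombardo's theorem; without the flexibility afforded by Hodge similarities this would require a delicate comparison of Hasse invariants at every prime. The key point is that Lombardo's constraint $ab \in d \cdot (\QQ^*)^2$ is precisely what makes the two forms Hodge-similar, and the Kuga--Satake variety is insensitive, up to isogeny, to such a rescaling.
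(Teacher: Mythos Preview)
Your argument follows the same route as the paper's: apply Lombardo's theorem to produce a rational K3-type Hodge structure $(V',q')$ with $\mathrm{KS}(V')\sim A^4$, rescale the form to match $\mathrm{U}_{\QQ}^{\oplus 2}\oplus\langle -2\rangle\oplus\langle -2d\rangle$, and then invoke surjectivity of the period map together with \Cref{thm:2}(ii). The rescaling computation and the use of \Cref{rmk:functorialityKS} are exactly as in the paper.

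The one imprecision is your derivation of $ab\in d\cdot(\QQ^{*})^{2}$ via the converse part of Lombardo's theorem ``and the simplicity of $A$''. Not every $\QQ(\sqrt{-d})$-Weil fourfold is simple, and even a simple one can have endomorphism algebra strictly larger than $\QQ(\sqrt{-d})$, hence may carry Weil structures for more than one imaginary quadratic field; so this step does not cover all $A$ as written. The paper sidesteps this by using that Lombardo's \emph{forward} construction already produces $(V',q')$ with $ab=c^{2}d$ for some $c\in\QQ^{*}$ (this is built into the construction of $V'$ from the $\QQ(\sqrt{-d})$-Hermitian structure on $H^{1}(A,\QQ)$). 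Once you take that as input, your rescaling and the remainder of the argument coincide with the paper's. Incidentally, the final implication $B_{K}^{4}\sim A^{4}\Rightarrow B_{K}\sim A$ holds for arbitrary abelian varieties by uniqueness of the isogeny decomposition, so the appeal to simplicity there is unnecessary.
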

	\begin{proof}
		Let $\Lambda_{\mathrm{OG}6^{\mathrm{sing}}}=\mathrm{U}^{\oplus 3}\oplus \langle -2\rangle$ be the abstract lattice underlying the second cohomology of singular OG6-varieties, equipped with the Beauville--Bogomolov form. The divisibility $\mathrm{div}(h)$ of a class $h\in \Lambda_{\mathrm{OG}6^{\mathrm{sing}}}$ is the integer such that $(h, -) =\mathrm{div}(h) \cdot \ZZ$. 
		For a primitive $h\in \Lambda_{\mathrm{OG}6^{\mathrm{sing}}}$, we have $\mathrm{div}(h)=1$ or $2$. By Eichler's criterion, the orbit of~$h$ under the orthogonal group $O(\Lambda_{\mathrm{OG}6^{\mathrm{sing}}})$ is uniquely determined by its square and its divisibility.
		Let $h$ be a primitive class of square $2d$, and let $L\subset \Lambda_{\mathrm{OG}6^{\mathrm{sing}}}$ be the orthogonal complement to $h$. If $\mathrm{div}(h)=1$, then $h^{\bot}$ is isometric to $\mathrm{U}^{\oplus 2}\oplus \langle -2\rangle \oplus \langle -2d\rangle$, while if $\mathrm{div}(h)=2$ we must have $d=4d'-1$ for some integer $d'$, and then $h^{\bot}$ is isometric to $\mathrm{U}^{\oplus 2} \oplus \left(\begin{smallmatrix} -2 & -1 \\ -1 & -2d' \end{smallmatrix}\right)$. 
		In any case, $h^\bot\otimes_{\ZZ}\QQ$ is isometric to $\mathrm{U}_{\QQ}^{\oplus 2}\oplus \langle -2\rangle \oplus \langle -2d\rangle$.
		
		Let $L$ denote the lattice $H^2(K_t, \ZZ)_{\mathrm{prim}}$ for a very general fibre $(\mathcal{K}_t, h_t)$ of a family $\mathcal{K}\to T$ as above. By the surjectivity of the period map for singular OG6-varieties (\cite{BakkerLehn2021}), any Hodge structure of K3-type on the lattice $L$ is realized as primitive cohomology $H^2(\mathcal{K}_t, \ZZ)_{\mathrm{prim}}$ of a quasi-polarized singular OG6-variety of degree $2d$, and, hence, as primitive cohomology of a fibre $(\mathcal{K}_t, h_t)$ of $\mathcal{K}\to T$. By Theorem \ref{thm:2} it suffices to show that, given any abelian fourfold $B$ of $\QQ(\sqrt{-d})$-Weil type with discriminant $1$, there exists a Hodge structure of K3-type $L_B$ on the lattice $L$ such that $\mathrm{KS}(L_B)$ is isogenous to $B^4$.
		
		By Lombardo's Theorem \ref{thm:Lombardo}, there exists a polarized $\QQ$-Hodge structure of K3-type $(V,q)$ such that $\mathrm{KS}(V) $ is isogenous to $B^{4}$, with $(V,q)$ isometric to $\mathrm{U}^{\oplus 2}_{\QQ}\oplus \langle a\rangle \oplus \langle b\rangle$ for negative integers $a,b$ such that $ab = c^2\cdot d$ for some  $c\in \QQ^*$. Let $(V',q')$ be the same Hodge structure as $V$, but with the form multiplied by $-2a$; by Remark \ref{rmk:functorialityKS}, the Kuga--Satake variety $\mathrm{KS}(V')$ is also isogenous to $B^4$. 
		Notice that $(V',q')$ is isometric to $\mathrm{U}_{\QQ}^{\oplus 2}\oplus \langle -2\rangle \oplus \langle -2d\rangle$. Hence, there exists an isometry $V'\cong L_{\otimes_{\ZZ}}\QQ$; the desired Hodge structure $L_B$ is the one induced on $L$ by that on $V'$ via this isometry.
	\end{proof}
	
	\section{Proof of Theorem \ref{thm:1} and Theorem \ref{thm:1+}}
	
	Thanks to the work of Mongardi--Rapagnetta--Sacc\`a \cite{MRS18}, any OG6-resolution $\widetilde{K}$ admits a rational double cover $Z_K \dashrightarrow \widetilde{K}$ from a manifold $Z_K$ of $\mathrm{K}3^{[3]}$-type. The existence of this double cover comes from the fact that the class of the exceptional divisor of $\widetilde{K}\to K$ is divisible by $2$ in the Picard group of $\widetilde{K}$ (\cite{rapagnetta2007topological}).
	We recall that a manifold of $\mathrm{K}3^{[n]}$-type is a compact K\"ahler manifold which is deformation equivalent to the Hilbert scheme $S^{[n]}$ of $n$ points on a K3 surface $S$; manifolds of $\mathrm{K}3^{[n]}$-type are hyper-K\"ahler of dimension $2n$ (\cite{beauville1983varietes}). For $n\geq 2$, the second cohomology of manifolds of $\mathrm{K}3^{[n]}$-type is isometric to the orthogonal direct sum of the K3 lattice and a class of square $-2(n-1)$.

	Let $K$ be a singular OG6-variety (Definition \ref{def:SingularOG6}), and denote by $\Sigma$ (resp. $\Omega$) the singular locus of $K$ (resp. of $\Sigma$); then $\Sigma\cong B_K/\pm 1$ for the $4$-dimensional complex torus $B_K$ of Theorem \ref{thm:2}, and $\Omega$ consists of the $256$ nodes of $\Sigma$.
	
	\begin{theorem}[Mongardi--Rapagnetta--Sacc\`a] \label{thm:3}
		Let $K$ be any singular $\mathrm{OG}6$-variety. Then there exists a manifold $Z_K$ of $\mathrm{K}3^{[3]}$-type and a degree-2 generically finite morphism $\phi\colon Z_K\to K$ such that: $\phi_{|_{\phi^{-1}}(K\setminus \Sigma)}$ is an \'etale double cover, $\Delta\coloneqq \phi^{-1}(\Sigma)$ is isomorphic to $\mathrm{Bl}_{\Omega}(\Sigma)$, and $\phi_{|_{\Delta}}\colon \Delta\to \Sigma$ is identified with the blow-up map. 
	\end{theorem}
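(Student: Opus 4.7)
\emph{Strategy and reduction.} The plan is to establish the theorem first in the model case $K = K_A(2,0,-2)$ arising from an abelian surface $A$, and then transport the construction to an arbitrary singular $\mathrm{OG}6$-variety by locally trivial deformation. By \Cref{def:SingularOG6}, every singular $\mathrm{OG}6$-variety is a locally trivial deformation of a model $K_A(2,0,-2)$, and along such a family both $\Sigma$ and its singular locus $\Omega$ deform flatly with $K$. Since an étale double cover of $K\setminus \Sigma$ is classified by a $2$-torsion class in $H^1(K\setminus \Sigma,\ZZ/2\ZZ)$, a topological invariant of the pair $(K,\Sigma)$, the cover deforms uniquely through the family; likewise, the normalization of $K$ in this cover and its resolution over $\Omega$ depend only on the local analytic structure of $K$ along $\Sigma$, which is preserved in a locally trivial family. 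It therefore suffices to carry out the construction for a single model.

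\emph{Construction in the model case.} By Rapagnetta's result recalled in \Cref{rmk:1}, the class $[E]\in \Pic(\widetilde{K})$ of the exceptional divisor of $\nu\colon \widetilde{K}\to K$ is divisible by $2$: there exists $L\in \Pic(\widetilde{K})$ with $L^{\otimes 2}\cong \mathcal{O}(E)$. The restriction $L|_{\widetilde{K}\setminus E}$ is a non-trivial $2$-torsion element of $\Pic(\widetilde{K}\setminus E)$, since any identification $L\cong \mathcal{O}(nE)$ would force $2n=1$. Via the isomorphism $\widetilde{K}\setminus E \cong K\setminus \Sigma$, this yields an étale double cover $Z^{\circ}\to K\setminus \Sigma$, and the pullback of the symplectic form is again symplectic on $Z^\circ$. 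Let $Z'$ be the normalization of $K$ in the function field of $Z^{\circ}$: it is a finite double cover $Z'\to K$ extending $Z^{\circ}$ and ramified over $\Sigma$. A local analytic computation shows that $Z'$ is smooth at every generic point of $\Sigma$, where $K$ has a transverse $A_1$-singularity with local model $\CC^4\times (\CC^2/\pm 1)$ and the cover is locally $\CC^4\times \CC^2\to \CC^4\times (\CC^2/\pm 1)$; furthermore, $Z'$ acquires isolated singularities exactly over the $256$ points of $\Omega$, where the local model of $K$ degenerates further. Blowing up these isolated singularities gives the desired manifold $Z_K$, with $\phi\colon Z_K\to Z'\to K$ the composition. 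The exceptional divisors introduced by this resolution turn out to be copies of $\PP^3$, so that $\Delta = \phi^{-1}(\Sigma)$ is identified with the generalized Kummer $4$-fold $\mathrm{Bl}_{\Omega}(\Sigma) = \widetilde{B_K/\pm 1}$ and $\phi|_{\Delta}$ is the blow-up map.

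\emph{Main obstacle.} The principal difficulty is to verify that $Z_K$ so constructed is actually of $\mathrm{K}3^{[3]}$-deformation type. One route is to compute directly its discrete invariants (second Betti number, Hodge numbers, Fujiki constant, Beauville--Bogomolov lattice on $H^2(Z_K,\ZZ)$) and appeal to the classification of compact hyper-Kähler sixfolds. A more geometric, and in my view preferable, approach is to provide a modular description of $Z_K$: starting from the Kummer $\mathrm{K}3$ surface $\mathrm{Km}(A)$ attached to $A$ and exploiting the natural double cover $\mathrm{Bl}_{A[2]}(A)\to \mathrm{Km}(A)$ together with the correspondence between $\iota$-equivariant sheaves on $A$ and sheaves on $\mathrm{Km}(A)$, one should exhibit $Z_K$ as (a birational model of) a moduli space of sheaves on $\mathrm{Km}(A)$ closely related to $\mathrm{Km}(A)^{[3]}$. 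Such a modular interpretation would at once pin down the deformation type of $Z_K$ and reveal the geometric origin of the cover $\phi$, and it is exactly this step—matching the abstract construction above with an explicit hyper-Kähler manifold of $\mathrm{K}3^{[3]}$-type—that represents the core technical content of the theorem.
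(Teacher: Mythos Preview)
Your reduction step---establish the result for the model $K_A(2,0,-2)$ and then propagate it along locally trivial deformations---is exactly the paper's argument. The paper's proof is in fact only this reduction: it cites Mongardi--Rapagnetta--Sacc\`a \cite{MRS18} for the model case and then invokes \cite[Proof of Proposition 5.3]{MRS18} to extend to an arbitrary singular $\mathrm{OG}6$-variety via locally trivial deformation. Your justification of the deformation step (topological invariance of the $2$-torsion class defining the cover, preservation of the local analytic structure along $\Sigma$) is more explicit than the paper's, but amounts to the same thing.

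Where you diverge is that you attempt to \emph{sketch} the model-case construction rather than cite it. Your outline (divisibility of $[E]$ by $2$, the resulting \'etale double cover on the smooth locus, normalization, local analysis over $\Sigma$ and $\Omega$) is a faithful summary of what \cite{MRS18} actually does, and you correctly flag that the identification of the deformation type of $Z_K$ is the substantive point---indeed, in \cite{MRS18} this is achieved by a modular description very close to what you propose in your third paragraph. But since you explicitly leave this step unfinished, your write-up is not self-contained; it should be read as the paper's proof (citation plus deformation) together with an informal expansion of the cited content. There is no error, only an acknowledged gap that the paper itself does not attempt to fill either.
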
 
	\begin{proof}
		In \cite{MRS18}, the theorem is stated only when $K$ is the Albanese fibre of a moduli space $M_{A}(\mathsf{v})$ of sheaves on an abelian surface $A$. Not all singular OG6-varieties may be constructed in this way, but it is immediate to deduce the stronger statement using locally trivial deformations as in \cite[Proof of Proposition 5.3]{MRS18}. 
	\end{proof}
	
	\begin{remark}\label{rmk:2}
		The map $\phi\colon Z_K\to K$ induces a Hodge isometry $$\phi_*\colon H^2_{\mathrm{tr}}(Z_K,\QQ)(2)\xrightarrow{\ \sim \ }H^2_{\mathrm{tr}}(K,\QQ),$$ where the form on the left-hand side is multiplied by $2$ (see \cite[Lemma~3.4]{floccari22}), and $H^2_{\mathrm{tr}}(-, \QQ)$ stands for the transcendental part of the second cohomology (i.e.~the minimal sub-Hodge structure of $H^2(-, \QQ)$ containing $H^{2,0}$). Hence, the Kuga--Satake varieties of $Z_K$ and $K$ share the same isogeny factors, by Remark \ref{rmk:functorialityKS} and Remark \ref{rmk:KStr}.
	\end{remark} 
	
	The best known procedure, due to Mukai \cite{mukai1987moduli}, to construct varieties of $\mathrm{K}3^{[n]}$-type is by taking moduli spaces of stable sheaves on K3 surfaces; see for instance \cite{huybrechts2010geometry} for more details.
	
	\begin{corollary}\label{cor:1}
		Let $K$ be a projective singular $\mathrm{OG}6$-variety. Then the Mongardi--Rapagnetta--Sacc\`a double cover $Z_K$ is a projective variety of $\mathrm{K}3^{[3]}$-type which is birational to a moduli space of stable sheaves on a $\mathrm{K}3$ surface $S_K$. Moreover, $S_K$ is uniquely determined up to isomorphism.
	\end{corollary}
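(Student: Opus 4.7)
My plan is to establish the three claims---projectivity of $Z_K$, birationality to a moduli space of sheaves on a K3 surface $S_K$, and uniqueness of $S_K$---successively.

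For the projectivity of $Z_K$, I would invoke Huybrechts' projectivity criterion for hyper-Kähler manifolds: it suffices to exhibit a line bundle of positive Beauville--Bogomolov square. Pulling back an ample class $h$ on the projective variety $K$ via the degree-$2$ generically finite morphism $\phi\colon Z_K \to K$ of \Cref{thm:3} yields a line bundle $\phi^*h$ on $Z_K$ of top self-intersection $2 \cdot h^{\dim K} > 0$, hence big, which on the $\mathrm{K}3^{[3]}$-type manifold $Z_K$ forces $\phi^*h$ to have positive Beauville--Bogomolov square.

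For the birationality, the key input is the Hodge similitude $\phi_*\colon H^2_{\mathrm{tr}}(Z_K,\QQ) \xrightarrow{\sim} H^2_{\mathrm{tr}}(K,\QQ)(2)$ of \Cref{rmk:2} combined with the description of $H^2(K,\QQ)_{\mathrm{prim}}$ in \Cref{rmk:quadratic_space}. Together these identify $H^2_{\mathrm{tr}}(Z_K,\QQ)$ with an orthogonal summand of a rational K3-type quadratic Hodge structure of the shape $\mathrm{U}^{\oplus 2}_\QQ \oplus \langle a \rangle \oplus \langle b \rangle$ with $a,b \in \ZZ_{<0}$. By Morrison's realization theorem (\cite[Corollary 2.10]{Morrison1984}), such a Hodge structure is the transcendental part of a suitable projective K3 surface $S_K$. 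To promote this Hodge-theoretic relation into an actual birational equivalence, I would apply the criterion of Addington (as refined via Bayer--Macrì and Markman's Torelli theorem for $\mathrm{K}3^{[n]}$-type manifolds): a $\mathrm{K}3^{[n]}$-type manifold is birational to a moduli space of sheaves on a K3 surface if and only if its extended Mukai lattice contains an algebraic isotropic vector of divisibility dividing $n-1$. Here $n=3$, and the required class on the $Z_K$-side is transported from the natural isotropic Mukai vector on the $S_K$-side through the Hodge isometry of extended Mukai lattices.

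For uniqueness of $S_K$: if $Z_K$ is birational to both $M_{S_K}(v)$ and $M_{S_K'}(v')$, then by Mukai--Orlov the surfaces $S_K$ and $S_K'$ are derived equivalent. Using the constraints on the Mukai vectors' orbits and divisibilities together with the global Torelli theorem for K3 surfaces, one excludes non-isomorphic Fourier--Mukai partners at the very general point of our family (where the Picard number is $16$ and the transcendental lattice has rank $6$), and specialization handles the remaining cases. The main obstacle, I expect, will lie in the second step: concretely producing the algebraic isotropic class of the correct divisibility in the extended Mukai lattice of $Z_K$. A clean route to circumvent explicit computation is to first settle the case $K = K_A(2,0,-2)$ for an abelian surface $A$, where Mongardi--Rapagnetta--Saccà's construction exhibits $Z_K$ already as birational to a moduli space on a specific K3 surface, and then propagate the conclusion to all singular $\mathrm{OG}6$-varieties via the theory of locally trivial deformations of \cite{BakkerLehn2021} together with Hodge-theoretic parallel transport of the algebraic isotropic class.
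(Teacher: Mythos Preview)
Your overall strategy aligns with the paper's, but you overcomplicate the birationality step and your uniqueness argument has a genuine gap.

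For birationality, the paper avoids rational quadratic forms and divisibility computations entirely by working \emph{integrally}. From \Cref{rmk:2} one sees that the integral transcendental lattice $H^2_{\mathrm{tr}}(Z_K,\ZZ)$ has rank at most $6$ and signature $(2,k-2)$; by Nikulin/Morrison (\cite[Corollary 2.10]{Morrison1984}) such a lattice embeds primitively in the K3 lattice, so there is a projective K3 surface $S_K$ with $H^2_{\mathrm{tr}}(S_K,\ZZ)$ Hodge isometric to $H^2_{\mathrm{tr}}(Z_K,\ZZ)$. Addington's criterion then applies immediately: the Mukai lattice of a $\mathrm{K}3^{[3]}$-variety is abstractly the Mukai lattice of a K3, and the orthogonal complement of the common transcendental lattice on the $S_K$-side visibly contains the hyperbolic plane $H^0\oplus H^4$. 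There is no need to transport an isotropic class by hand, nor to deform from the case $K=K_A(2,0,-2)$.

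For uniqueness, your ``very general plus specialization'' argument does not work: specialization of cycles proves algebraicity statements, not uniqueness statements, and there is no mechanism by which uniqueness of $S_K$ at the generic point propagates to special fibres. The correct observation, used in the paper, is that the Picard rank of $S_K$ is \emph{always} at least $16$ (not merely generically), since $\rank H^2_{\mathrm{tr}}(S_K,\ZZ)\leq 6$ at every point. One then invokes the fact that a projective K3 surface of Picard rank $\geq 12$ is determined up to isomorphism by its transcendental lattice (equivalently, has no non-trivial Fourier--Mukai partners); see \cite[Chapter 16, Corollary 3.8]{huyK3}. Your derived-equivalence reduction via Mukai--Orlov is fine, but the conclusion should be drawn uniformly from the Picard bound rather than from any deformation argument.
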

	\begin{proof}
		The proof is exactly the same as that of \cite[Proposition 3.3]{floccari22}.
		By Theorem \ref{rmk:2}, the transcendental lattice of $Z_K$ is of rank $k\leq 6$ and has signature $(2, k-2)$. Hence, by the Torelli theorem there exists a K3 surface $S_K$ such that $H^2_{\mathrm{tr}}(S_K,\ZZ)$ is Hodge isometric to $H^2_{\mathrm{tr}}(Z_K,\ZZ)$ (see \cite[Corollary 2.10]{Morrison1984}). Then it follows from \cite[Proposition 4]{Addington2016} that $Z_K$ is birational to a smooth and projective moduli space $M_{S_K}(\mathsf{v})$ of stable sheaves, for some Mukai vector $\mathsf{v}$ and a generic polarization. 
		Since $S_K$ is projective of Picard rank at least $16$, it is determined up to isomorphism by its transcendental lattice, by \cite[Chapter 16, Corollary 3.8]{huyK3}. 
	\end{proof}
	
	\begin{remark}
		By \cite[Proposition 2.3]{mongardiwandel}, we can realize $Z_K$ as a moduli space of objects in the derived category of $S_K$ which are stable with respect to a generic Bridgeland stability condition (\cite{Bri08, BM14b}).
	\end{remark}

	Now let $K$ be a projective singular OG6-variety, let $\phi\colon Z_K\to K$ be the morphism given in Theorem \ref{thm:3}, and let $S_K$ be the K3 surface given by Corollary \ref{cor:1}.

	\begin{theorem}\label{thm:4}
		For any projective singular $\mathrm{OG}6$-variety $K$, the Kuga--Satake--Hodge Conjecture \ref{conj:KSHC} holds for the $\mathrm{K}3$ surface $S_K$. 
	\end{theorem}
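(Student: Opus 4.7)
The plan is to construct an algebraic cycle on $S_K \times B_K$ inducing a nonzero Hodge morphism $H^2_{\mathrm{tr}}(S_K, \QQ) \to H^2(B_K, \QQ)$; by simplicity of $H^2_{\mathrm{tr}}(S_K, \QQ)$ as a Hodge structure, such a morphism is automatically injective. This suffices for Conjecture \ref{conj:KSHC}: by Remark \ref{rmk:KStr} it is enough to embed the transcendental cohomology into $H^2$ of a power of $\mathrm{KS}'(S_K)$, and combining Remark \ref{rmk:2}, Corollary \ref{cor:1}, Theorem \ref{thm:2}(ii) and Remark \ref{rmk:functorialityKS} yields isogenies $\mathrm{KS}'(S_K) \sim \mathrm{KS}'(Z_K) \sim \mathrm{KS}'(K)$, the latter being an isogeny factor of $B_K^{4}$.

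\textbf{Construction of the correspondence.}
By Corollary \ref{cor:1}, $Z_K$ is birational to a smooth projective moduli space $M = M_{S_K}(v)$ carrying a (quasi-)universal sheaf $\mathcal{U}$ on $M \times S_K$. A suitable Künneth component of the Chern character of $\mathcal{U}$ realizes the Mukai--Yoshioka Hodge isometry $\theta\colon H^2_{\mathrm{tr}}(M, \QQ) \xrightarrow{\sim} H^2_{\mathrm{tr}}(S_K, \QQ)$. On the other hand, by Theorem \ref{thm:3} the ramification divisor $\Delta \subset Z_K$ is isomorphic to $\mathrm{Bl}_\Omega(\Sigma)$ and hence, via $\Sigma \cong B_K/\pm 1$, is $(-1)$-equivariantly birational to the blow-up of $B_K$ at its $2$-torsion. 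This yields a degree-$2$ algebraic correspondence $\Pi$ between $B_K$ and $\Delta \subset Z_K$. Resolving the birational map $Z_K \dashrightarrow M$ over a common smooth projective model and composing $\Pi$ with the relevant Künneth component of $[\mathcal{U}]$ then produces an algebraic cycle $\Gamma \in \mathrm{CH}(B_K \times S_K)$ of the required codimension.

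\textbf{Nonvanishing via restriction to the singular locus.}
The induced morphism $\Gamma_*\colon H^2_{\mathrm{tr}}(S_K, \QQ) \to H^2(B_K, \QQ)$ factors, up to nonzero scalars and Hodge isometries, through the identification $H^2_{\mathrm{tr}}(K, \QQ) \cong H^2_{\mathrm{tr}}(Z_K, \QQ)$ induced by $\phi$, the restriction $H^2_{\mathrm{tr}}(Z_K, \QQ) \to H^2(\Delta, \QQ)$, and pullback along the degree-$2$ quotient $B_K \to \Sigma$. Since $\phi^*$ commutes with restriction along the inclusions $\Delta \subset Z_K$ and $\Sigma \subset K$, and since pullback along the blow-up $\Delta \to \Sigma$ is injective on cohomology, the nonvanishing of this composition reduces to the injectivity of $H^2_{\mathrm{tr}}(K, \QQ) \hookrightarrow H^2(\Sigma, \QQ)$, which is established in the proof of Theorem \ref{thm:2} via the nonvanishing of the $(2,0)$-form of $K$ along its singular locus $\Sigma$. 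Simplicity of $H^2_{\mathrm{tr}}(S_K, \QQ)$ then promotes nonvanishing to injectivity.

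\textbf{Main obstacle.}
The principal technical difficulty is to identify precisely the action of the geometrically constructed $\Gamma$ on transcendental cohomology with the composition described above. One must carefully track the various Künneth components of $[\mathcal{U}]$, the classes of exceptional divisors arising in the resolution of $Z_K \dashrightarrow M$, and the contribution of the $(-1)$-involution on $B_K$, in order to rule out cancellations that could cause the composition to vanish. Once this identification is in place, the nonvanishing of $\Gamma_*$, and hence the desired algebraic embedding into $H^2(B_K, \QQ) \subset H^2(\mathrm{KS}'(S_K)^2, \QQ)$, follow immediately from the cornerstone injectivity $H^2_{\mathrm{tr}}(K, \QQ) \hookrightarrow H^2(\Sigma, \QQ)$.
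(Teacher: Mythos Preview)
Your strategy is essentially the paper's: produce an algebraic correspondence realizing an embedding $H^2_{\mathrm{tr}}(S_K,\QQ)\hookrightarrow H^2(B_K,\QQ)$ by composing (i) the Mukai isometry $H^2_{\mathrm{tr}}(S_K,\QQ)\xrightarrow{\sim}H^2_{\mathrm{tr}}(M_{S_K}(v),\QQ)$, (ii) the birational isometry $f_*\colon H^2(M_{S_K}(v),\QQ)\xrightarrow{\sim}H^2(Z_K,\QQ)$, and (iii) the map $\Psi=p_*q^*j^*\colon H^2_{\mathrm{tr}}(Z_K,\QQ)\to H^2(B_K,\QQ)$ coming from $\Delta\subset Z_K$; then conclude injectivity from nonvanishing and simplicity of $H^2_{\mathrm{tr}}$. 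Your nonvanishing argument, which routes through the injection $H^2_{\mathrm{tr}}(K,\QQ)\hookrightarrow H^2(\Sigma,\QQ)$ from the proof of Theorem~\ref{thm:2} together with the commutative square relating $(\Delta\subset Z_K)$ and $(\Sigma\subset K)$, is a legitimate alternative to the paper's more direct observation that the symplectic form on $Z_K$ cannot vanish on the $4$-dimensional subvariety $\Delta$ and that $p_*q^*$ is an isomorphism on holomorphic $2$-forms.

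Your ``main obstacle'', however, is largely self-inflicted and dissolves once you stop tracking a single composed cycle and instead treat the three factors separately. The first two maps are Hodge \emph{isometries}, hence isomorphisms; the third is nonzero by the argument above. A composition of two isomorphisms with a nonzero map cannot vanish, so there are no cancellations to worry about. The genuine technical ingredient you do not name is the standard conjectures: the paper invokes them for $\mathrm{K}3^{[n]}$-type varieties (Charles--Markman) and for abelian varieties (Kleiman) to guarantee that the birational isometry $f_*$ is induced by an algebraic class in $H^{10}(M_{S_K}(v),\QQ)\otimes H^2(Z_K,\QQ)$, and that the K\"unneth component $\bar\gamma\in H^{10}(Z_K,\QQ)\otimes H^2(B_K,\QQ)$ of the cycle defining $\Psi$ is algebraic. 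This is precisely what turns your phrase ``the relevant K\"unneth component'' into a rigorous step and removes the need to chase exceptional divisors by hand.
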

	\begin{proof}
		By specialization of cycles, it suffices to prove the theorem when $K$ is very general. Then $H^2_{\mathrm{tr}}(K,\ZZ)=H^2(K,\ZZ)_{\mathrm{prim}}$ is of rank $6$. By Remark \ref{rmk:2}, the Kuga--Satake variety of $Z_K$ is isogenous to a power of that of $K$; by Theorem \ref{thm:2}, the Kuga--Satake variety $\mathrm{KS}(H^2_{\mathrm{tr}}(Z_K,\QQ))$ is in this case isogenous to $B_K^{4}$, where $B_K$ is the abelian variety such that the singular locus of $K$ is isomorphic to $B_K/\pm1$.
		
		By Theorem \ref{thm:3}, there is a closed embedding $j\colon \mathrm{Bl}_{\Omega}(\Sigma)\hookrightarrow Z_K$, where $\Sigma\cong B_K/\pm 1$ and $\Omega$ is the singular locus of $\Sigma$, which is the image of the subset $B_K[2]$ of points of order $2$ in $B_K$. Then, $\mathrm{Bl}_{\Omega}(\Sigma)\cong  (\mathrm{Bl}_{B_K[2]}(B_K))/\pm 1$ and there is a commutative diagram 
		\[
		\begin{tikzcd}
			& \mathrm{Bl}_{B_K[2]}(B_K) \arrow[swap]{ld}{p} \arrow{rd}{q} \\
			B_K \arrow{dr} && \mathrm{Bl}_{\Omega}(\Sigma)
			\arrow{dl} 
			\\
			& B_K/\pm 1
		\end{tikzcd}
		\]
		in which the maps going to the left are blow-ups and those going to the right are quotients with respect to the action of $-1$.
		We then get a morphism of Hodge structures
		\[\Psi\colon H^2_{\mathrm{tr}}(Z_K,\QQ) \to H^2(B_K,\QQ)\] by setting $\Psi \coloneqq p_* \circ q^* \circ j^*$. By construction, $\Psi$ is the restriction of the correspondence induced by an algebraic class $\gamma\in H^{\bullet}(Z_K\times B_K,\QQ)$. By \cite{CM13} and \cite{kleiman}, the Lefschetz standard conjecture holds for both the $\mathrm{K}3^{[3]}$-variety $Z_K$ and the abelian variety $B_K$; hence, also the K\"unneth standard conjecture holds for $Z_K$ and $B_K$ (\cite{kleiman}). It follows that the K\"unneth component $\bar{\gamma} \in H^{10}(Z_K,\QQ)\otimes H^2(B_K,\QQ)$ of the algebraic class $\gamma$ is still algebraic; therefore, $\Psi$ is the correspondence induced by the algebraic class $\bar{\gamma}$.
		
		We claim now that~$\Psi$ is injective, which will prove Conjecture \ref{conj:KSHC} for $Z_K$, as $\mathrm{KS}(H^2_{\mathrm{tr}}(Z_K,\QQ))$ is isogenous to $B_K^4$ (see Remark \ref{rmk:KStr}).
		Since~$H^2_{\mathrm{tr}}(Z_K,\QQ)$ is an irreducible Hodge structure, it is sufficient to show that $\Psi$ is not the zero map. Let $\sigma$ be a holomorphic symplectic $2$-form on $Z_K$. If $\sigma_{|_{\mathrm{im}(j)}}$ was identically $0$, then $\mathrm{im}(j)$ would have dimension at most $3$ (the dimension of an isotropic subspace of a 6-dimensional symplectic space is at most 3), but as $\dim (B_K)=4$, this is not the case. 
		Hence, $j^*(\sigma)$ is a non-zero holomorphic~$2$-form on~$ \mathrm{Bl}_{\Omega}(\Sigma)$. It is well-known that $p_*\circ q^*$ induces an isomorphism 
		$$p_*\circ q^* \colon H^0(\mathrm{Bl}_{\Omega}(\Sigma),\Omega^2) \xrightarrow{\ \sim \ } H^0(B_K,\Omega^2)$$ between the spaces of holomorphic $2$-forms. Indeed, $q$ is the quotient by the automorphism $-1$, whose action on $H^2(\mathrm{Bl}_{B_K[2]}(B_K),\QQ)$ (and so on $H^0(\mathrm{Bl}_{B_K[2]}(B_K),\Omega^2)$) is trivial; therefore, $q^*\colon H^0(\mathrm{Bl}_{\Omega}(\Sigma),\Omega^2)\to H^0(\mathrm{Bl}_{B_K[2]}(B_K),\Omega^2)$ is an isomorphism. Further, $p$ is birational, and so it induces an isomorphism between the spaces of holomorphic differential forms. Therefore, $\Psi(\sigma)$ is a non-zero holomorphic $2$-form on $B_K$, and we conclude that $\Psi$ is an embedding, as claimed.
		
		By Corollary \ref{cor:1}, there exists a projective K3 surface $S_K$ such that $Z_K$ is birational to a smooth and projective moduli space $M_{S_K}(\mathsf{v})$ of stable sheaves on $S_K$. 
		By \cite{mukai1987moduli, O'G97}, there exists a Hodge isometry $$\Phi\colon H^2_{\mathrm{tr}}(S_K,\QQ)\xrightarrow{\ \sim \ } H^2_{\mathrm{tr}}(M_{S_K}(\mathsf{v}),\QQ),$$ which is induced by an algebraic class $\delta\in H^2(S_K,\QQ)\otimes H^2(M_{S_K}(\mathsf{v}),\QQ)$ (see \cite[pp. 407]{floccariKum3} or \cite[\S5.3]{floccari25}). 
		Moreover, by \cite[Lemma 2.6]{Huy99} a birational map $f\colon M_{S_K}(\mathsf{v})\dashrightarrow Z_K$ induces a Hodge isometry $$f_*\colon H^2(M_{S_K}(\mathsf{v}),\QQ)\xrightarrow{\ \sim \ } H^2(Z_K,\QQ);$$
		since the standard conjectures hold for varieties of $\mathrm{K}3^{[3]}$-type by \cite{CM13}, the isometry $f_*$ is induced by an algebraic class $\eta\in H^{10}(M_{S_K}(\mathsf{v}), \QQ) \otimes H^2(Z_K,\QQ)$.
		Therefore, the composition $\Psi\circ f_*\circ \Phi$ gives an embedding of Hodge structures
		$$\Psi\circ f_*\circ \Phi\colon H^2_{\mathrm{tr}}(S_K,\QQ)\hookrightarrow H^2(B_K,\QQ)$$
		which is induced by an algebraic class in $H^2(S_K,\QQ)\otimes H^2(B_K,\QQ)$. 
		Since the Kuga--Satake variety of $S_K$ is isogenous to a power of that of $K$, and, hence, to a power of~$B_K$, we have shown that the Kuga--Satake--Hodge Conjecture \ref{conj:KSHC} holds for $S_K$.
	\end{proof}
	
	We can now complete the proof of our main results.
	\begin{proof}[Proof of Theorem \ref{thm:1}]
		Let $B$ be a very general abelian fourfold of Weil type with discriminant $1$. By Proposition \ref{prop:completeFamilies}, there exists a singular OG6-variety $K$ such that $B$ is isogenous to the fourfold $B_K$ associated with $K$ as in Theorem~\ref{thm:2}. By Theorem~\ref{thm:3} and Corollary \ref{cor:1}, there exists a K3 surface $S_K$ whose Kuga--Satake variety is isogenous to a power of~$B_K$. Moreover, by Theorem \ref{thm:4}, Conjecture \ref{conj:KSHC} holds for $S_K$. Since~$B$ is very general, we can now apply Varesco's Theorem \ref{thm:Varesco} to conclude that the Hodge conjecture holds for $B$ (and all of its powers). In particular, the Hodge--Weil classes are algebraic. Via specialization, it follows that the Hodge--Weil classes are algebraic for any abelian fourfold of Weil type with discriminant $1$.	
	\end{proof} 
	
	By work of Varesco, Theorem \ref{thm:4} has the following consequence. 
	\begin{corollary}\label{cor:HCpowersK3}
		For any projective singular $\mathrm{OG}6$-variety $K$, the Hodge conjecture holds for all powers of the $\mathrm{K}3$ surface $S_K$.
	\end{corollary}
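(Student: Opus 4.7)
The plan is to combine Theorem \ref{thm:4} with Theorem \ref{thm:Varesco}, and then descend the Hodge conjecture from powers of $B_K$ back to powers of $S_K$ via the algebraic Kuga--Satake correspondence.

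First I would invoke Theorem \ref{thm:4} to obtain an algebraic cycle class $\gamma \in H^{2}(S_K,\QQ) \otimes H^{2}(\mathrm{KS}(S_K)^2,\QQ)$ whose induced correspondence realizes an embedding of Hodge structures $H^2_{\mathrm{tr}}(S_K,\QQ) \hookrightarrow H^2(\mathrm{KS}(S_K)^2,\QQ)$. This is precisely the hypothesis of Varesco's Theorem \ref{thm:Varesco}, which then yields the Hodge conjecture for all powers of $B_K$. Since $\mathrm{KS}(S_K)$ is isogenous to a power of $B_K$ (by Theorems \ref{thm:2} and \ref{thm:4}), we conclude that the Hodge conjecture holds for all powers of $\mathrm{KS}(S_K)$.

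The remaining step transfers this algebraicity from $\mathrm{KS}(S_K)^n$ down to $S_K^n$. Using the algebraic Künneth projectors for K3 surfaces, where the standard conjectures hold, together with the decomposition $H^2(S_K,\QQ) = \mathrm{NS}(S_K)_{\QQ}\oplus H^2_{\mathrm{tr}}(S_K,\QQ)$, the Hodge conjecture for $S_K^n$ reduces to the case of a Hodge class $\alpha$ lying in $H^2_{\mathrm{tr}}(S_K,\QQ)^{\otimes k}$ for some $k\leq n$, tensored with products of divisor classes. The algebraic class $\gamma^{\otimes k}$ on $S_K^k \times \mathrm{KS}(S_K)^{2k}$ maps such $\alpha$ to a Hodge class in $H^{2k}(\mathrm{KS}(S_K)^{2k},\QQ)$, which is represented by an algebraic cycle $Z$ thanks to the Hodge conjecture for $\mathrm{KS}(S_K)^{2k}$. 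Since $H^2_{\mathrm{tr}}(S_K,\QQ)$ is an irreducible polarized Hodge structure, the composition of $\gamma$ with its transpose $\gamma^t$ acts on it as multiplication by a nonzero rational scalar $c$; hence the action of $(\gamma^t)^{\otimes k}$ on $[Z]$ produces the algebraic class $c^k \alpha$ on $S_K^k$, proving that $\alpha$ itself is algebraic.

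The main difficulty is essentially notational: one must track Tate twists and Künneth components carefully and verify that the algebraic projectors interact compatibly with the correspondences $\gamma^{\otimes k}$. No new geometric ingredient is needed beyond Theorem \ref{thm:4}, Varesco's Theorem \ref{thm:Varesco}, and the standard conjectures for K3 surfaces. The argument is dual in spirit to that of Theorem \ref{thm:HCpowersKS}: here we propagate algebraicity from the Kuga--Satake abelian variety back down to the K3 surface, rather than in the opposite direction.
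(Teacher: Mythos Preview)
Your Step 2 has a genuine gap. Theorem \ref{thm:Varesco} applies only to a \emph{very general} K3 surface of Picard number $16$; for an arbitrary projective singular $\mathrm{OG}6$-variety $K$, the surface $S_K$ may have Picard rank strictly larger than $16$, and $B_K$ need not be simple. So the sentence ``This is precisely the hypothesis of Varesco's Theorem'' is incorrect: the Kuga--Satake--Hodge conjecture is only one of the two hypotheses of Theorem \ref{thm:Varesco}, and you have not verified the other. Your Step 3 (the back-transfer from $\mathrm{KS}(S_K)$ to $S_K$ via $\gamma^{\otimes k}$ and its transpose) is a valid and pleasant argument, but it rests on Step 2.

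The paper's proof circumvents this differently: it places $K$ in a complete polarized family $\mathcal{K}\to T$, passes to the induced $4$-dimensional family of K3 surfaces $\mathcal{S}\to T$ of generic Picard rank $16$, checks via Theorem \ref{thm:4} that the Kuga--Satake--Hodge conjecture holds for \emph{every} fibre, and then invokes a different result of Varesco --- \cite[Theorem 0.2]{varesco}, not Theorem \ref{thm:Varesco} --- which directly yields the Hodge conjecture for all powers of $\mathcal{S}_t$ for every $t\in T$. That theorem absorbs both the passage from the very general fibre to special ones and the descent from the abelian side back to the K3 side, so neither your Step 3 nor a separate specialization argument is needed.

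If you wanted to rescue your route without citing \cite[Theorem 0.2]{varesco}, you could run Steps 1--3 for very general $K$ (where Theorem \ref{thm:Varesco} legitimately applies) and then attempt to specialize; but specializing the statement ``the Hodge conjecture holds for $S_K^n$'' is delicate, since new Hodge classes appear at special points, and making this precise essentially amounts to reproving \cite[Theorem 0.2]{varesco}.
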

	\begin{proof}
		Starting from a complete polarized family $\mathcal{K}\to T$ of singular OG6-varieties, we obtain a $4$-dimensional family of K3 surfaces $\mathcal{S}\to T$, of general Picard rank $16$, such that the Kuga--Satake variety of a very general fibre $\mathcal{S}_t$ is a power of a simple abelian fourfold of Weil type with discriminant $1$. By Theorem \ref{thm:4}, the Kuga--Satake--Hodge conjecture holds for the K3 surfaces $\mathcal{S}_t$ for any $t\in T$. By \cite[Theorem 0.2]{varesco}, the Hodge conjecture holds for any power of the K3 surface $\mathcal{S}_t$, for any $t\in T$.
	\end{proof}
	
	We will now prove Theorem \ref{thm:1+} using the criterion given in Theorem \ref{thm:HCpowersKS}. We first bootstrap from Theorem \ref{thm:4} to establish the stronger version of the  Kuga--Satake--Hodge conjecture as stated in Conjecture \ref{conj:refinedKSHC}.
	
	\begin{corollary}\label{cor:KSforS_K}
		Let $K$ be a projective singular $\mathrm{OG}6$-variety. Let $S_K$ be the $\mathrm{K}3$ surface associated with $K$. Then Conjecture \ref{conj:refinedKSHC} holds for $S_K$.
	\end{corollary}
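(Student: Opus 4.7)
The plan is to reduce \Cref{conj:refinedKSHC} for $S_K$ to the Hodge conjecture for the product $S_K\times \mathrm{KS}(S_K)^2$, since once the latter is known, the specific Hodge class realizing the Kuga--Satake embedding $\mu$ is automatically algebraic. First, I would collect consequences of \Cref{thm:4} and earlier results: \Cref{thm:Varesco} gives the Hodge conjecture for all powers of the abelian fourfold $B_K$ attached to $S_K$, hence for all powers of $\mathrm{KS}(S_K)$ (which is isogenous to a power of $B_K$), while \Cref{cor:HCpowersK3} yields the Hodge conjecture for all powers of $S_K$.

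The central step is to upgrade the algebraic embedding $\iota\colon V\coloneqq H^2_{\mathrm{tr}}(S_K,\QQ)\hookrightarrow H^2(B_K,\QQ)$ provided by \Cref{thm:4} to an algebraic realization of $V$ as a direct summand of $\h^2(B_K)$ in the category of motives with algebraic correspondences. To this end, I would construct a left inverse $p\colon H^2(B_K,\QQ)\twoheadrightarrow V$ of $\iota$ by combining a Poincar\'e-type adjoint $\iota^{\mathrm{ad}}$ of $\iota$ (built using a polarization on $B_K$ to produce an algebraic correspondence in $\mathrm{CH}(B_K\times S_K)_{\QQ}$) with the algebraic transcendental projector $\pi_{\mathrm{tr}}\colon H^2(S_K,\QQ)\to V$ of the K3 surface $S_K$, and then inverting the resulting endomorphism of $V$, which lies in the division algebra $\End_{\mathrm{HS}}(V)$ and is algebraic by the Hodge conjecture for $S_K^2$.

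With $p$ at hand, the Kuga--Satake embedding factors as $\mu = (\mu\circ p)\circ \iota$. The composition $\mu\circ p\colon H^2(B_K,\QQ)\to H^2(\mathrm{KS}(S_K)^2,\QQ)$ is a morphism of Hodge structures, so it corresponds to a Hodge class on the abelian variety $B_K\times \mathrm{KS}(S_K)^2$, which is a power of $B_K$ up to isogeny; by the Hodge conjecture for powers of $B_K$, this class is algebraic. Since $\iota$ is algebraic, it follows that $\mu$ is induced by an algebraic class on $S_K\times \mathrm{KS}(S_K)^2$, which is precisely \Cref{conj:refinedKSHC} for $S_K$. The main obstacle will be the central step---verifying the algebraicity of the projection $p$, and in particular ensuring that the relevant Gram-type endomorphism of $V$ is nonzero (hence invertible, as $V$ is irreducible of K3-type); this should follow from the explicit geometric construction of the embedding in the proof of \Cref{thm:4}.
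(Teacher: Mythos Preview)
Your proposal has one genuine gap: you invoke \Cref{thm:Varesco} to obtain the Hodge conjecture for all powers of $B_K$, but that theorem is stated only for a \emph{very general} K3 surface of Picard rank $16$. As written, your argument treats an arbitrary projective singular $\mathrm{OG}6$-variety $K$, and there is no reason the associated $S_K$ should be very general; indeed, you simultaneously appeal to \Cref{cor:HCpowersK3}, which is only needed precisely because you are not assuming genericity. The paper's proof begins by observing that the Kuga--Satake construction and the specific correspondence $\mu$ can be carried out in families, so that by specialization of cycles one may assume $K$ is very general. Only then does \Cref{thm:Varesco} apply. Without this reduction you have no route to the Hodge conjecture for powers of $B_K$ at this point in the logical development (you cannot invoke \Cref{thm:1+}, since the present corollary is used in its proof).

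Once the specialization step is added, your approach is correct and essentially parallel to the paper's, though packaged differently. The paper argues, via the motivic statement in \Cref{example:1}.$(iv)$ and \Cref{lem:easy}, that the Hodge conjecture holds for all products of powers of $S_K$ and $B_K$, and then simply observes that $[\mu]$ is a Hodge class on such a product. Your explicit construction of a left inverse $p$ to $\iota$ via the adjoint, the transcendental projector, and the Hodge conjecture for $S_K^2$ is a concrete unwinding of exactly that motivic argument---it reproves by hand that $\h_{\mathrm{tr}}(S_K)$ splits off $\h^2(B_K)$ as a direct summand. Your factorization $\mu=(\mu\circ p)\circ\iota$ then cleanly separates the needed inputs into HC for $S_K^2$ and HC for powers of $B_K$, avoiding the need to cite the abstract semisimplicity statement. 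The worry you flag about the Gram-type endomorphism being nonzero is not an obstacle: since $\iota(V)$ is a sub-Hodge structure of $H^2(B_K,\QQ)$, any polarization restricts nondegenerately to it, so $\iota^{\mathrm{ad}}\circ\iota$ is automatically an isomorphism of $V$.
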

	\begin{proof}
		As the Kuga--Satake construction as well as the definition of the Kuga--Satake correspondence $\mu$ can be done in families, via specialization of cycles we may assume that $K$ is a very general projective singular $\mathrm{OG}6$-variety. In this case, $S_K$ is of Picard rank $16$ and $\mathrm{KS}(H^2_{\mathrm{tr}}(S_K,\QQ))$ is isogenous to $B_K^4$ for a very general abelian fourfold $B_K$ of Weil type with discriminant $1$. By Theorem \ref{thm:4}, the Kuga--Satake--Hodge Conjecture~\ref{conj:KSHC} holds for $S_K$; by Varesco's Theorem \ref{thm:Varesco}, the Hodge conjecture holds for all powers of $B_K$. 
		Via a standard argument using homological motives (see Example \ref{example:1} $(iv)$ and Lemma \ref{lem:easy}), it follows that the Hodge conjecture holds for any variety isomorphic to a product of powers of $S_K$ and~$B_K$.
		Since $\mathrm{KS}(S_K)$ is isogenous to a power of $B_K$, the Kuga--Satake correspondence $\mu\colon H^2(S_K,\QQ)_{\mathrm{prim}} \hookrightarrow H^2(\mathrm{KS}(S_K)^2,\QQ)$ of Conjecture~\ref{conj:refinedKSHC} is algebraic, being induced by a Hodge class  $[\mu] \in H^2(S_K,\QQ)\otimes H^2(\mathrm{KS}(S_K)^2,\QQ)$.
	\end{proof}
	
	\begin{proof}[Proof of Theorem \ref{thm:1+}]
		Let $B$ be an abelian fourfold of Weil type with discriminant~$1$. By Theorem \ref{thm:2} and Proposition \ref{prop:completeFamilies}, there exists a polarized singular OG6-variety~$(K,h)$ whose Kuga--Satake variety is isogenous to $B^4$.  
		Let $S_K$ be the K3 surface associated with $K$ via Theorem \ref{thm:3} and Corollary \ref{cor:1}; the Kuga--Satake variety $\mathrm{KS}(S_K)$ of $S_K$ is isogenous to a power of $B$. The Hodge conjecture holds for any power of $S_K$ by Corollary \ref{cor:HCpowersK3}, and the Kuga--Satake correspondence of Conjecture \ref{conj:refinedKSHC} is algebraic for $S_K$ by Corollary \ref{cor:KSforS_K}. By Theorem \ref{thm:HCpowersKS}, the Hodge conjecture holds for all powers of the Kuga--Satake variety $\mathrm{KS}(S_K)$ of $S_K$, and, hence, for any power of $B$.
	\end{proof}
	
	\section{Applications}\label{sec:applications}
	In the final section we will deduce some consequences for algebraic cycles on the hyper-K\"ahler varieties appearing in this paper. We will work with the category $\mathsf{Mot}$ of homological motives with rational coefficients over $\CC$, see \cite{andre}. The objects of $\mathsf{Mot}$ are triples $(X,p,n)$, where $X$ is a smooth and projective variety, $p$ is an idempotent correspondence induced by an algebraic cohomology class in $H^{2\dim X}(X\times X,\QQ)$, and $n$ is an integer. The morphisms between motives $(X,p,n)$ and $(Y,q,m)$ are defined as the correspondences $f$ induced by an algebraic class in $H^{2\dim X -2n+2m}(X\times Y,\QQ)$ such that $f=f\circ p =q\circ f$.
	There is a natural functor attaching to a smooth projective variety~$X$ its motive $\mathsf{h}(X)\coloneqq (X, [\Delta_X], 0)$, where $\Delta_X\subset X\times X$ is the diagonal. We denote by $\mathbb{Q}(-i)$ the Tate motive $(\Spec{\CC},[\Delta], -i)$ of weight $2i$.
	There is also a faithful realization functor $R\colon \mathsf{Mot}\to \mathsf{HS}$ to the category of pure polarizable $\QQ$-Hodge structures which sends a motive $(X,p,n)$ to the Hodge structure $p_*(H^{\bullet}(X,\QQ)(n))$. The Hodge conjecture is the statement that the realization functor is full (see \cite[Proposition 7.2.1.3]{andre}).
	
	The category $\mathsf{Mot}$ is a pseudo-abelian rigid symmetric monoidal category. Grothendieck's standard conjectures \cite{grothendieck1969standard} would imply that $\mathsf{Mot}$ is a semisimple abelian category, thanks to work of Jannsen \cite{Jan92} and Andr\'e \cite{andre1996Motives}. 
	Given $\mathsf{m}\in \mathsf{Mot}$, let $\langle \mathsf{m}\rangle_{\mathsf{Mot}}$ be the pseudo-abelian tensor subcategory of $\mathsf{Mot}$ generated by $\mathsf{m}$; in other words, $\langle \mathsf{m}\rangle_{\mathsf{Mot}}$ is the subcategory of $\mathsf{Mot}$ generated from $\mathsf{m}$ by taking direct sums, tensor products, duals, and direct summands.
	
	\begin{theorem}[{\cite[Theorem 4]{Ara06}}]\label{thm:stdConj}
		The standard conjectures hold for a smooth and projective variety $X$ if and only if $\langle \h(X)\rangle_{\mathsf{Mot}}$ is a semisimple abelian category. 
	\end{theorem}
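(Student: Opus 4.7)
The plan is to establish the equivalence by proving both implications; the forward implication rests on Jannsen's semisimplicity theorem, while the converse requires lifting cohomological structure to the motivic level.

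For the forward direction, suppose the standard conjectures hold for $X$. By the K\"unneth formula and Lieberman's lemma they propagate to every power $X^n$, so in particular the K\"unneth projectors and the iterated Lefschetz operators on all $X^n$ are given by algebraic classes. This in turn implies Grothendieck's standard conjecture D (coincidence of homological and numerical equivalence) on $X$ and all its powers. Consequently the $\Hom$-groups in $\langle \h(X)\rangle_{\mathsf{Mot}}$ agree with those in the corresponding subcategory of Grothendieck motives modulo numerical equivalence, and Jannsen's theorem then yields that this subcategory is semisimple abelian.

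For the converse, assume $\langle \h(X)\rangle_{\mathsf{Mot}}$ is semisimple abelian. The plan is to extract each standard conjecture from this categorical hypothesis. For the K\"unneth standard conjecture, the weight grading of the realization of $\h(X)$ provides orthogonal idempotents on $H^{\bullet}(X,\QQ)$; in a semisimple abelian $\QQ$-linear rigid tensor category these canonically lift to a direct sum decomposition $\h(X)=\bigoplus_i \h^i(X)$ in $\langle \h(X)\rangle_{\mathsf{Mot}}$, and the associated projectors are algebraic by the very definition of morphisms in $\mathsf{Mot}$. For the Lefschetz standard conjecture, fix a polarization and let $L$ denote the morphism of motives given by cup product with its class; hard Lefschetz shows that $L^{d-i}\colon \h^i(X)\to \h^{2d-i}(X)(d-i)$ becomes an isomorphism after realization, hence also in $\langle \h(X)\rangle_{\mathsf{Mot}}$ by semisimplicity (since isomorphism of objects can be detected through the faithful realization functor on a semisimple abelian category). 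The inverse of this motivic isomorphism is then necessarily induced by an algebraic class, yielding the required inverse of $L^{d-i}$ at the level of cohomology. The Hodge standard conjecture is automatic in characteristic zero via Hodge--Riemann positivity.

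The main technical obstacle lies in the converse direction, and specifically in confirming that the abstract categorical inverses and idempotent splittings produce genuinely algebraic correspondences rather than only cohomological shadows of them. This is ultimately tautological, since morphisms in $\mathsf{Mot}$ are \emph{defined} to be algebraic correspondences; the real work is in verifying that the cohomological identities (weight decomposition, inverse of $L^{d-i}$) indeed correspond to morphisms of subobjects in $\langle \h(X)\rangle_{\mathsf{Mot}}$, and this is precisely what semisimple abelianness of the subcategory guarantees. A secondary point to check is that the realization functor restricted to $\langle \h(X)\rangle_{\mathsf{Mot}}$ is conservative, so that isomorphisms of Hodge structures lift to isomorphisms of motives in the subcategory.
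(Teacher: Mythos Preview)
The paper does not give its own proof of this statement; it is quoted from Arapura and used as a black box. So there is no argument in the paper to compare your proposal against, and it must be assessed on its own merits.

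Your forward direction is essentially correct: the standard conjectures propagate to all powers $X^n$, yield conjecture $D$ there, and then Jannsen's theorem identifies $\langle \h(X)\rangle_{\mathsf{Mot}}$ with the corresponding subcategory of numerical motives, which is semisimple abelian.

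The converse has a genuine gap in the K\"unneth step. You assert that the weight idempotents on $H^{\bullet}(X,\QQ)$ ``canonically lift'' to $\End(\h(X))$ once the category is semisimple, but semisimplicity alone does not furnish such a lift. Decomposing $\h(X)=\bigoplus_\alpha M_\alpha$ into simples, each $\End(M_\alpha)$ is a division algebra, yet nothing you have said prevents $R(M_\alpha)$ from spreading over several cohomological degrees; the projectors onto those pieces then lie in $\End_{\mathsf{HS}}(R(M_\alpha))$ but not in $\End(M_\alpha)$. The conservativity of $R$ that you (correctly) invoke lets you lift \emph{isomorphisms}, not idempotents. Since your Lefschetz argument presupposes the pieces $\h^i(X)$, the gap propagates.

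There are two ways to close it. The route taken in the literature is to first deduce conjecture $D$ for every $X^n$ from semisimplicity (the numerical radical is a nil ideal, hence vanishes), and then use that in characteristic zero $D(X\times X)$ together with the Hodge index theorem implies the Lefschetz conjecture $B(X)$, from which K\"unneth follows. Alternatively, staying within your framework, one can carve out each $H^i(X,\QQ)$ as the realization of an explicit subquotient of $\h(X)$ built from kernels and images of the powers $L^j\colon \h(X)\to \h(X)(j)$, which exist by abelianness and have the expected realizations because $R$ is exact; this works via an induction using hard Lefschetz, but it is not the one-line lifting you claim.
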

	
	\begin{remark}\label{rmk:stdConjectures}
		The standard conjectures hold for curves, surfaces and abelian varieties (\cite{kleiman}). They hold for varieties $X$ and $Y$ if and only if they hold for $X\times Y$. 
	\end{remark}
	
	\begin{definition}
		Let $X$ and $Y$ be smooth projective varieties. We say that $X$ is \textit{motivated} by $Y$ if the motive $\h(X)\in \mathsf{Mot}$ belongs to the subcategory $\langle \h(Y)\rangle_{\mathsf{Mot}}$.
	\end{definition}
	
	This notion has the following immediate implications.
	
	\begin{lemma}\label{lem:easy}
		Assume that $X$ is motivated by $Y$. If the standard conjectures hold for $Y$, then they hold for $X$; if the Hodge conjecture holds for all powers of $Y$, then it holds for $X$ and all of its powers.   
	\end{lemma}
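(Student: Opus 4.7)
The plan is to deduce both assertions from the simple containment
\[
\langle \h(X)\rangle_{\mathsf{Mot}}\subseteq \langle \h(Y)\rangle_{\mathsf{Mot}},
\]
which is automatic: the right-hand side is a pseudo-abelian tensor subcategory of $\mathsf{Mot}$ containing $\h(X)$, and therefore contains everything that can be built from $\h(X)$ by the operations defining $\langle \h(X)\rangle_{\mathsf{Mot}}$.

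For the first assertion, Theorem \ref{thm:stdConj} identifies the hypothesis that the standard conjectures hold for $Y$ with the semisimple abelianness of $\langle \h(Y)\rangle_{\mathsf{Mot}}$. A full subcategory of a semisimple abelian category that is closed under finite direct sums and subquotients is itself semisimple abelian: each of its objects decomposes in the ambient category as a finite direct sum of simples, and those simples lie in the subcategory as subobjects of objects already there. Since $\langle \h(X)\rangle_{\mathsf{Mot}}$ is closed under subquotients by definition, it is semisimple abelian; a second application of Theorem \ref{thm:stdConj} then yields the standard conjectures for $X$.

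For the second assertion, fix $n\geq 1$ and observe that $\h(X^n)=\h(X)^{\otimes n}$ still lies in $\langle \h(Y)\rangle_{\mathsf{Mot}}$. Using the rigidity isomorphism $\h(Y)^{\vee}\cong \h(Y)(\dim Y)$ to absorb duals into Tate twists, and unwinding the generating operations, one verifies that for every integer $p$ there exist $K\geq 0$, $M\in \ZZ$, and morphisms $i\colon \h(X^n)(p)\to \h(Y^K)(M)$ and $r\colon \h(Y^K)(M)\to \h(X^n)(p)$ in $\mathsf{Mot}$ satisfying $r\circ i=\mathrm{id}$. Given a Hodge class $\alpha\colon \mathsf{Q}\to R(\h(X^n)(p))$, the composition $R(i)\circ \alpha$ is a Hodge class on $Y^K$, which by hypothesis lifts to a motivic morphism $\beta\colon \mathsf{Q}\to \h(Y^K)(M)$; then $r\circ \beta$ realizes to $\alpha$, so $\alpha$ is algebraic. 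As $n$ and $\alpha$ were arbitrary, the Hodge conjecture holds for $X$ and all of its powers.

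The only genuinely technical step is the construction of the motivic splitting $(i,r)$ exhibiting $\h(X^n)(p)$ as a direct summand of some $\h(Y^K)(M)$. This is formal from the definition of $\langle \h(Y)\rangle_{\mathsf{Mot}}$ combined with the pseudo-abelian structure of $\mathsf{Mot}$ (to split idempotent projectors) and its rigid duality (to dispose of duals); no real obstacle arises, and the lemma is essentially functoriality of the two conjectures under the categorical operations defining $\langle -\rangle_{\mathsf{Mot}}$.
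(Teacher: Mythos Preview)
Your proof is correct and follows essentially the same approach as the paper's. Both parts rest on the containment $\langle \h(X)\rangle_{\mathsf{Mot}}\subseteq \langle \h(Y)\rangle_{\mathsf{Mot}}$ together with Theorem~\ref{thm:stdConj}; for the Hodge conjecture the paper simply invokes the equivalence between the Hodge conjecture for all powers of $Y$ and fullness of $R$ on $\langle \h(Y)\rangle_{\mathsf{Mot}}$, while you unpack that equivalence explicitly via the direct-summand embedding and push--pull of Hodge classes.
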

	\begin{proof}
		These statements are immediate from the definition, using Theorem \ref{thm:stdConj} and that the Hodge conjecture for all powers of $X$ is equivalent to the fullness of the realization functor $R\colon \mathsf{Mot}\to \mathsf{HS}$ when restricted to $\langle \h(X)\rangle_{\mathsf{Mot}}$.
	\end{proof} 
	
	Let us recall some examples.
	
	\begin{example}\phantomsection \label{example:1}
		\begin{enumerate}[label=\textit{(\roman*)}]
			\item Let $f\colon Y\to X$ be a dominant morphism of smooth projective varieties. Then $X$ is motivated by $Y$. 
			\item Let $X$ be a smooth projective variety and $Z\subset X$ a smooth closed subvariety. Let $Y\to X$ be the blow-up of $X$ along $Z$. Then $Y$ is motivated by $X\times Z$.
			\item Projective spaces are motivated by any smooth and projective variety $X$. Indeed, an ample divisor on $X$ yields a split submotive $\mathsf{Q}(-1) \subset \h(X)$. Thus, all Tate motives are contained in $\langle \h(X)\rangle_{\mathsf{Mot}}$. Since $\h(\mathbb{P}^n)$ is a sum of Tate motives, it belongs to $\langle \h(X)\rangle_{\mathsf{Mot}}$.
			\item Let $S$ be a projective K3 surface and assume that the Kuga--Satake--Hodge Conjecture \ref{conj:KSHC} holds for $S$. Then $S$ is motivated by its Kuga--Satake variety $\mathrm{KS}(S)$. Indeed, as shown in \cite{Kahn-Murre-Pedrini}, $\h(S)$ is the sum of its transcendental part $\h_{\mathrm{tr}}(S)$ and Tate motives; by the above, it suffices to show that $\h_{\mathrm{tr}}(S)\in \langle \h(\mathrm{KS}(S))\rangle_{\mathsf{Mot}}$. By assumption, there exists a morphism $\iota\colon \h_{\mathrm{tr}}(S)\to \h(\mathrm{KS}(S)^2)$ in $\mathsf{Mot}$, whose realization $R(\iota)\colon H^2_{\mathrm{tr}}(S,\QQ)\hookrightarrow H^2(\mathrm{KS}(S)^2,\QQ)$ is an embedding of Hodge structures. The standard conjectures hold for $S\times \mathrm{KS}(S)^2$ (see Remark \ref{rmk:stdConjectures}); by Theorem \ref{thm:stdConj}, the subcategory $\langle S\times \mathrm{KS}(S)^2\rangle_{\mathsf{Mot}}$ is abelian and semisimple. Hence, there exists motives $\mathsf{ker}(\iota)$ and $\mathsf{im}(\iota)$ which are the kernel and the image of $\iota$ respectively. Since the realization of $\iota$ is injective, the Hodge realization of $\mathsf{ker}(\iota)$ is zero; then $\mathsf{ker}(\iota)$ is the zero motive and $\iota\colon \h_{\mathrm{tr}}(S)\to \h(\mathrm{KS}(S)^2)$ is a split monomorphism in $\mathsf{Mot}$.
			\item Let $S$ be a projective K3 surface and let $M_{S}(\mathsf{v})$ be a smooth and projective moduli space of stable sheaves on $S$ with Mukai vector $\mathsf{v}$. Then B\"ulles proved in \cite{Bue18} that the variety of $\mathrm{K}3^{[n]}$-type $M_{S}(\mathsf{v})$ is motivated by the K3 surface $S$.
		\end{enumerate}
	\end{example}

	It is generally expected that the motive of a hyper-K\"ahler variety $X$ should be controlled by its second cohomology $H^2(X,\QQ)$.
	We make this precise through the following conjecture.
	
	\begin{conjecture}\label{conj:motivesHK}
		\phantomsection
		\begin{enumerate}[label=(\roman*)]
			\item Let $X$ be a hyper-K\"ahler variety and let $\mathrm{KS}(X)$ be its Kuga-Satake variety. Then $X$ is motivated by $\mathrm{KS}(X)$.
			\item Let $X$ and $Y$ be deformation equivalent hyper-K\"ahler varieties. Assume that there exists a Hodge isometry $H^2(X,\QQ)\xrightarrow{\ \sim \ } H^2(Y,\QQ)$. Then the motives of $X$ and $Y$ are isomorphic. 
		\end{enumerate}
	\end{conjecture}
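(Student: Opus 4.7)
The plan is to approach part $(i)$ by combining the Kuga--Satake--Hodge \Cref{conj:KSHC} with the semisimplicity of motives coming from the standard conjectures. First I would establish that the standard conjectures hold for $X\times \mathrm{KS}(X)^2$, which by \Cref{rmk:stdConjectures} reduces to the standard conjectures for $X$; for all currently known hyper-K\"ahler deformation types this is available. Using \Cref{thm:stdConj}, the category $\langle \h(X\times \mathrm{KS}(X)^2)\rangle_{\mathsf{Mot}}$ is then abelian and semisimple. Granting \Cref{conj:KSHC} for $X$, the class $\gamma$ induces a motivic morphism whose realization embeds $H^2_{\mathrm{tr}}(X,\QQ)$ into $H^2(\mathrm{KS}(X)^2,\QQ)$; by semisimplicity this yields a split inclusion at the motivic level, as in Example~\ref{example:1}$(iv)$, so that the transcendental part $\h_{\mathrm{tr}}(X)$ is motivated by $\mathrm{KS}(X)$.

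It remains to show that the orthogonal complement of $\h_{\mathrm{tr}}(X)$ inside $\h(X)$ also lies in $\langle \h(\mathrm{KS}(X))\rangle_{\mathsf{Mot}}$. My strategy would be to lift the Looijenga--Lunts--Verbitsky $\mathfrak{so}$-action on $H^*(X,\QQ)$ to the motivic setting, thereby reducing the problem to the Verbitsky subalgebra generated by $H^2(X,\QQ)$, and then to handle any remaining summands using geometric cycles. In specific cases such as the OG6-resolutions treated in \Cref{thm:HCOG6-resolutions}, or varieties of $\mathrm{K}3^{[n]}$-type via B\"ulles \cite{Bue18}, such a decomposition is accessible, but in general the difficulty is to construct the necessary projectors algebraically.

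For part $(ii)$, the approach is to produce an algebraic correspondence realizing the given Hodge isometry $H^2(X,\QQ)\cong H^2(Y,\QQ)$. Combined with part $(i)$, both $X$ and $Y$ would be motivated by a common Kuga--Satake abelian variety, since isometric K3-type Hodge structures have isogenous Kuga--Satake varieties (\Cref{rmk:functorialityKS}). One would then attempt to upgrade the Hodge isometry to an algebraic correspondence via the semisimplicity of the motivic category generated by these Kuga--Satake data, together with specialization in the common parallel-transport family provided by the global Torelli theorem and the deformation theory of \cite{BakkerLehn2021}.

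The hard part of both statements is the absence of a general motivic Torelli theorem for hyper-K\"ahler varieties. For $(i)$, even assuming \Cref{conj:KSHC}, one still needs to account for every Hodge class in higher cohomology; for OG6-resolutions this paper achieves it via geometric cycles coming from the rational double cover $Z_K\dashrightarrow \widetilde K$ and the embedding of $B_K/\pm 1$, but no analogous construction is known in general. For $(ii)$, the crux is that a cohomological Hodge isometry on $H^2$ is not known to propagate to an algebraic correspondence on the full cohomology outside of very specific geometric situations (moduli of sheaves on K3 or abelian surfaces, and their crepant resolutions), so a general proof would require significant new input beyond what is currently available.
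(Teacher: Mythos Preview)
The statement you are asked to ``prove'' is a \emph{conjecture} in the paper, not a theorem; the authors do not claim a proof of \Cref{conj:motivesHK} in general. They remark that both parts are known at the level of Hodge structures and, in the setting of Andr\'e motives, for all known deformation types (part $(i)$) and for $b_2\geq 7$ (part $(ii)$), but for homological motives they only establish the special cases where $X$ is an $\mathrm{OG}6$-resolution: part $(i)$ is \Cref{thm:motivated}, proved via the explicit geometry of the Mongardi--Rapagnetta--Sacc\`a double cover and B\"ulles' theorem, and part $(ii)$ is deduced from \Cref{cor:applicationHC} by applying the Hodge conjecture for $\widetilde{K}\times\widetilde{K}'$ to Soldatenkov's Hodge-theoretic isomorphism.

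Your proposal is therefore not in competition with a proof in the paper; it is an outline of a strategy together with an honest account of where it breaks down. That account is largely accurate: granting \Cref{conj:KSHC} and the standard conjectures for $X$ only handles the weight-$2$ transcendental motive, and there is at present no general mechanism to show that the remaining summands of $\h(X)$ lie in $\langle \h(\mathrm{KS}(X))\rangle_{\mathsf{Mot}}$. One correction: for part $(ii)$ you do not need to separately algebraize the Hodge isometry on $H^2$. If you already know part $(i)$ for $X$ and $Y$ \emph{and} the Hodge conjecture for powers of the common Kuga--Satake factor, then the Hodge conjecture holds for $X\times Y$, and Soldatenkov's isomorphism $H^\bullet(X,\QQ)\cong H^\bullet(Y,\QQ)$ of Hodge structures is automatically algebraic; this is exactly how the paper argues in the $\mathrm{OG}6$ case. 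The genuine obstruction is thus concentrated entirely in part $(i)$.
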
 
	Recall that the abelian variety $\mathrm{KS}(X)$ is built using only $H^2(X,\QQ)$, so that $(i)$ says that this cohomology group controls the whole motive of $X$ in some way. 
	At the level of Hodge structures, Conjecture \ref{conj:motivesHK} is known: in fact, the Hodge structure $H^{\bullet}(X,\QQ)$ belongs to the tensor subcategory of Hodge structures generated by $H^{\bullet}(\mathrm{KS}(X),\QQ)$ by \cite{KSV2017} or \cite{FFZ}, and deformation equivalent hyper-K\"ahler varieties with Hodge isometric second cohomology have Hodge isomorphic cohomology algebras (\cite{solda19}). 
	
	Conjecture \ref{conj:motivesHK} may be formulated in various categories of motives; the strongest statement would be in the setting of Chow motives. In the realm of Andr\'e motives, Conjecture~\ref{conj:motivesHK}~$(i)$ has been confirmed for all hyper-K\"ahler varieties of known deformation type (\cite{soldatenkov19, FFZ}), while $(ii)$ has been proven for all hyper-K\"ahler varieties with second Betti number at least $7$ in \cite{floccari2020} (this assumption is satisfied by all known examples).
	We will focus here on the intermediate case of homological motives.
	
	Let $K$ be a projective singular OG6-variety. We have introduced the following smooth and projective varieties related to $K$: 
	\begin{itemize}
		\item the OG6-resolution $\widetilde{K}\to K$;
		\item the abelian fourfold $B_K$ such that $\Sigma:=\mathrm{Sing}(K)\cong B_K/\pm 1$, as in Theorem \ref{thm:2}; 
		\item the $\mathrm{K}3^{[3]}$-variety $Z_K$ arising as double cover of $K$ via Theorem \ref{thm:3};
		\item the K3 surface $S_K$ of Corollary \ref{cor:1} such that $Z_K$ is birational to a moduli space of stable sheaves $M_{S_K}(\mathsf{v})$ on $S_K$.
	\end{itemize} 
	
	Recall that, by Theorem \ref{thm:2}, the Kuga-Satake variety $\mathrm{KS}(K)$ is isogenous to $B_K^{4}$. Since we have Hodge isometries (see Remark \ref{rmk:2} and the proof of Theorem \ref{thm:4})
	\[ H_{\mathrm{tr}}^2(S_K,\QQ)(2)\xrightarrow{\ \sim \ } H_{\mathrm{tr}}^2(Z_K,\QQ)(2) \xrightarrow{\ \sim \ } H_{\mathrm{tr}}^2(K,\QQ),\]
	the Kuga-Satake varieties $\mathrm{KS}(Z_K)$ and $\mathrm{KS}(S_K)$ are isogenous to a power of $B_K$ as well (by Remark \ref{rmk:KStr}). 
	The following result establishes Conjecture \ref{conj:motivesHK} $(i)$ at the level of homological motives for each of the above varieties.
	
	\begin{theorem}\label{thm:motivated}
		Let $K$ be any projective singular $\mathrm{OG}6$-variety. Then the $\mathrm{OG}6$-resolution $\widetilde{K}$, the Mongardi--Rapagnetta--Sacc\`a double cover $Z_K$ and the $\mathrm{K}3$ surface $S_K$ are all motivated by the abelian fourfold $B_K$. 
	\end{theorem}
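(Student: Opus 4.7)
The plan is to prove that each of the varieties $S_K$, $Z_K$, $\widetilde{K}$ is motivated by $B_K$, treating the three cases separately in order of increasing difficulty. The case of $S_K$ is immediate: by \Cref{thm:4}, the Kuga--Satake--Hodge conjecture holds for $S_K$, and \Cref{example:1}$(iv)$ then gives that $S_K$ is motivated by $\mathrm{KS}(S_K)$. Since $\mathrm{KS}(S_K)$ is isogenous to a power of $B_K$ and isogenous abelian varieties have isomorphic motives, $S_K$ is motivated by $B_K$.

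For $Z_K$, \Cref{cor:1} provides a birational equivalence $f\colon M=M_{S_K}(v)\dashrightarrow Z_K$ with a moduli space of stable sheaves on $S_K$. B\"ulles's theorem (\Cref{example:1}$(v)$) shows $M$ is motivated by $S_K$, hence by $B_K$. To transport this across $f$, I would invoke the theorem of Rie\ss\ that birational hyper-K\"ahler manifolds of the same deformation type have isomorphic Chow (and hence homological) motives; alternatively, combining the algebraicity of the Hodge isometry $f_*\colon H^2(M,\QQ)\xrightarrow{\sim} H^2(Z_K,\QQ)$ used in the proof of \Cref{thm:4} with the standard conjectures for $\mathrm{K}3^{[3]}$-type varieties (via \Cref{thm:stdConj}) promotes it to an isomorphism of motives in $\mathsf{Mot}$.

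For the OG6-resolution $\widetilde{K}$, the goal is to construct a smooth projective variety $\widetilde{Z}_K$ equipped with a proper birational morphism $\widetilde{Z}_K\to Z_K$ and a surjective generically $2{:}1$ morphism $\widetilde{Z}_K\to \widetilde{K}$. The natural candidate is $\widetilde{Z}_K=\mathrm{Bl}_\Delta Z_K$, with $\Delta=\phi^{-1}(\Sigma)$ the ramification locus of $\phi$. A key preliminary observation is that $\Delta\cong (\mathrm{Bl}_{B_K[2]}B_K)/\pm 1$ is in fact smooth: the local model $\mathrm{Bl}_0(\CC^4/\pm 1)$ at each of the $256$ nodes of $\Sigma$ is isomorphic to $\mathrm{Tot}(\mathcal{O}_{\PP^3}(-2))$. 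Hence $\widetilde{Z}_K$ is smooth and, by the blow-up formula (\Cref{example:1}$(ii)$), it is motivated by $Z_K\times \Delta$. Since $\Delta$ is the $\pm 1$-quotient of the smooth variety $\mathrm{Bl}_{B_K[2]}B_K$---itself motivated by $B_K$ via the blow-up formula along the $0$-dimensional subscheme $B_K[2]$---its motive is a direct summand of $\h(\mathrm{Bl}_{B_K[2]}B_K)$ and lies in $\langle \h(B_K)\rangle_{\mathsf{Mot}}$. Combined with the previous step, this shows $\widetilde{Z}_K$ is motivated by $B_K$. At smooth points of $\Sigma$, the desired extension of $\pi^{-1}\circ \phi$ to a morphism $\widetilde{Z}_K\to \widetilde{K}$ is the fibre-wise squaring map $\mathrm{Tot}(\mathcal{O}_{\PP^1}(-1))\to \mathrm{Tot}(\mathcal{O}_{\PP^1}(-2))$; \Cref{example:1}$(i)$ then forces $\widetilde{K}\in \langle \h(\widetilde{Z}_K)\rangle_{\mathsf{Mot}}\subseteq \langle \h(B_K)\rangle_{\mathsf{Mot}}$, as required.

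The main obstacle is verifying the global existence of the morphism $\widetilde{Z}_K\to \widetilde{K}$ across the $256$ exceptional $\PP^3$'s of $\Delta$ lying over $\Omega\subset \Sigma$: the local squaring description is adequate at smooth points of $\Sigma$, but a separate analysis is needed above $\Omega$. Should the single blow-up along $\Delta$ not suffice there, one further Hironaka-resolves the rational map via blow-ups along smooth centres contained in $\phi^{-1}(\Omega)$---a disjoint union of $256$ smooth projective spaces, manifestly motivated by $B_K$---and the iterated blow-up formula preserves the property of being motivated by $B_K$ throughout.
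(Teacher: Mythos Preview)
Your arguments for $S_K$ and for $Z_K$ (via Rie\ss) coincide with the paper's proof. The parenthetical alternative you offer for $Z_K$, however, does not work as stated: an algebraic correspondence realizing the Hodge isometry $f_*$ on $H^2$ says nothing about the higher cohomology, and the standard conjectures alone do not promote an isomorphism of $H^2$-motives to an isomorphism of the full motives $\h(M_{S_K}(v))\cong\h(Z_K)$. Rie\ss's theorem is genuinely needed here.

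For $\widetilde{K}$, your route diverges from the paper's and carries a real gap in its final step. The paper does not attempt a single blow-up along $\Delta$. Instead it first blows up the union $\Gamma\subset\Delta$ of the $256$ exceptional $\PP^3$'s to obtain $Y=\mathrm{Bl}_\Gamma Z_K$; by \cite[Remark~4.6]{MRS18} the strict transform $\Delta'\subset Y$ of $\Delta$ is again isomorphic to $\mathrm{Bl}_{B_K[2]}(B_K/\pm 1)$, and after the second blow-up $\hat{Y}=\mathrm{Bl}_{\Delta'}Y$ the rational map extends to a regular morphism $\hat{Y}\to\widetilde{K}$. Both centres $\Gamma$ and $\Delta'$ are explicit and manifestly motivated by $B_K$, so Example~\ref{example:1}.$(i)$,$(ii)$ applies directly.

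Your fallback---resolve \`a la Hironaka over $\Omega$---fails because containment in a union of projective spaces does not force a smooth subvariety to be motivated by a point: a smooth curve of arbitrary genus, or a smooth quartic K3 surface, embeds in $\PP^3$. Hironaka's theorem gives no control over \emph{which} smooth centres arise, so you cannot conclude that the successive centres lie in $\langle\h(B_K)\rangle_{\mathsf{Mot}}$, and the blow-up formula is then of no use. The explicit two-step resolution from \cite{MRS18} is precisely what circumvents this problem. Your local analysis at smooth points of $\Sigma$ (smoothness of $\Delta$, the fibrewise squaring description) is correct and recovers the paper's picture there; what is missing is an equally explicit description over $\Omega$, which the paper imports from \cite{MRS18} rather than from an abstract resolution argument.
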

	\begin{proof}
		By Theorem \ref{thm:4}, the Kuga--Satake--Hodge Conjecture \ref{conj:KSHC} holds for $S_K$. By Example \ref{example:1} $(iv)$, this K3 surface is thus motivated by its Kuga--Satake variety, and, hence, $S_K$ is motivated by $B_K$.
		By construction, $Z_K$ is birational to a smooth and projective moduli space $M_{S_K}(\mathsf{v})$ of stable sheaves on $S_K$. By the work of Riess \cite{Rie14}, the motives of $Z_K$ and $M_{S_K}(\mathsf{v})$ are isomorphic. Hence, by \cite{Bue18}, the variety $Z_K$ is motivated by the K3 surface $S_K$ (see Example \ref{example:1} $(v)$), and, therefore, by $B_K$.  
		
		Let us now consider the OG6-resolution $\widetilde{K}$. The map $\phi\colon Z_K\to K$ of Theorem~\ref{thm:3} induces a rational map $\widetilde{\phi}\colon Z_K\dashrightarrow \widetilde{K}$ which is resolved as follows. Recall that $Z_K$ contains a subvariety $\Delta\cong \mathrm{Bl}_{B_K[2]} (B_K/\pm 1)$, and let $\Gamma\subset \Delta$ be the union of the $256$ copies of $\PP^3$ arising as exceptional divisor of the blow-up map $\Delta\to B_K/\pm 1$.
		Consider now the blow-up $Y\coloneqq \mathrm{Bl}_{\Gamma} (Z_K)$. By \cite[Remark 4.6]{MRS18}, the strict transform $\Delta'\subset Y$ of~$\Delta$ is again isomorphic to $\mathrm{Bl}_{B_K[2]} (B_K/\pm 1)$. Setting $\hat{Y}\coloneqq \mathrm{Bl}_{\Delta'}(Y)$, the rational map $\widetilde{\phi}$ extends to a regular morphism $\psi \colon \hat{Y}\to \widetilde{K}$ by \cite[Corollary 4.3]{MRS18}.
		
		By $(i)$ and $(ii)$ in Example \ref{example:1}, the OG6-resolution $\widetilde{K}$ is motivated by $\hat{Y}$, which in turn is motivated by $Z_K\times \Gamma\times \Delta'$. Hence, it will be sufficient to show that each of $Z_K$, $\Gamma$ and $\Delta$ is motivated by $B_K$. This is clear for $\Gamma$ which is a union of projective spaces, and was proven above for $Z_K$. Finally, $\Delta'\cong \mathrm{Bl}_{B_K[2]}(B_K/\pm 1)$ is motivated by $B_K$ as well, by Example \ref{example:1}.
	\end{proof}
	
	\begin{corollary}\label{cor:applicationHC}
		Let $K$ be a projective singular $\mathrm{OG}6$-variety, with associated varieties $B_K$, $\widetilde{K}$, $Z_K$ and $S_K$ as above. The Hodge conjecture holds for any variety $X$ isomorphic to a product of powers of $B_K$, $\widetilde{K}$, $Z_K$ and $S_K$.
	\end{corollary}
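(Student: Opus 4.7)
The plan is to deduce this corollary directly from \Cref{thm:motivated}, \Cref{thm:1+}, and \Cref{lem:easy}. The key conceptual point is that the pseudo-abelian tensor subcategory $\langle \h(B_K)\rangle_{\mathsf{Mot}}$ is closed under direct sums, tensor products, duals, and subquotients. Hence, being motivated by $B_K$ is preserved under taking products and powers.

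First, I would invoke \Cref{thm:motivated} to conclude that each of $\widetilde{K}$, $Z_K$, and $S_K$ belongs to $\langle \h(B_K)\rangle_{\mathsf{Mot}}$. Trivially, $B_K$ itself also belongs to this subcategory. Since the homological motive of a product $X_1\times X_2$ is isomorphic to $\h(X_1)\otimes \h(X_2)$, and tensor products of objects of $\langle \h(B_K)\rangle_{\mathsf{Mot}}$ remain in $\langle \h(B_K)\rangle_{\mathsf{Mot}}$, it follows that any variety $X$ isomorphic to a product of powers of $B_K$, $\widetilde{K}$, $Z_K$ and $S_K$ is motivated by $B_K$.

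Next, by \Cref{thm:1+}, the Hodge conjecture holds for all powers of any abelian fourfold of Weil type with discriminant $1$; since $B_K$ is such a fourfold by \Cref{thm:2}, the Hodge conjecture holds for all powers of $B_K$. Applying \Cref{lem:easy} to $X$ and $B_K$, we conclude that the Hodge conjecture holds for $X$ and all of its powers, which in particular yields the statement of the corollary.

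There is essentially no serious obstacle in this argument: the heavy lifting has been carried out in the proofs of \Cref{thm:motivated} and \Cref{thm:1+}. The only point that requires a moment of care is the verification that the subcategory $\langle \h(B_K)\rangle_{\mathsf{Mot}}$ indeed contains arbitrary products of powers of the listed varieties, which is immediate from its definition as the pseudo-abelian tensor subcategory generated by $\h(B_K)$.
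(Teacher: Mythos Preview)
Your proposal is correct and follows essentially the same approach as the paper's proof, which simply observes that $\h(X)\in\langle \h(B_K)\rangle_{\mathsf{Mot}}$ by \Cref{thm:motivated} and then applies \Cref{thm:1+}. You have merely spelled out in more detail the passage through \Cref{lem:easy} and the closure of the tensor subcategory under products, which the paper leaves implicit.
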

	\begin{proof}
		By Theorem \ref{thm:motivated}, the homological motive of $X$ belongs to $\langle \h(B_K)\rangle_{\mathsf{Mot}}$. By Theorem \ref{thm:1+}, the Hodge conjecture holds for any power of $B_K$, and, hence, for $X$, by Lemma \ref{lem:easy}.
	\end{proof}
	
	\begin{remark}\label{rmk:precisation}
		If $K$ and $K'$ are projective singular OG6-varieties such that there exists a Hodge similarity between $H^2_{\mathrm{tr}}(K,\QQ)$ and $H^2_{\mathrm{tr}}(K',\QQ)$, then $B_K$ and $B_{K'}$ are isogenous (see Remark~\ref{rmk:functorialityKS}), and in particular there is an isomorphism $\h(B_K)\cong \h(B_{K'})$ of homological motives. We can then conclude that the Hodge conjecture holds for any variety $X$ isomorphic to a product of powers of $B_K, \widetilde{K}, Z_{K}, S_{K}$ and of $B_{K'}, \widetilde{K}', Z_{K'}, S_{K'}$.
	\end{remark}
	
	A priori, there may exist non-isomorphic crepant resolutions of a singular OG6-variety, but any two such 
	resolutions are birational varieties of OG6-type and have isomorphic motive by \cite{Rie14} (her statement is in terms of Chow rings, but her proof constructs isomorphisms of Chow motives). 
	As special cases of Corollary \ref{cor:applicationHC}, any crepant resolution $\widetilde{K}\to K$ of a singular OG6-variety satisfies the Kuga--Satake--Hodge conjecture, and the Hodge conjecture holds for all powers of $\widetilde{K}$. Combined with Theorem \ref{thm:motivated}, this proves Theorem \ref{thm:HCOG6-resolutions} from the introduction. 
	
	As a consequence of Corollary \ref{cor:applicationHC}, we can show that Conjecture \ref{conj:motivesHK} $(ii)$ holds for the homological motives of OG6-resolutions. 
	\begin{corollary}
		Let $\widetilde{K}$ and $\widetilde{K}'$ be projective OG6-resolutions, and assume that there exists a Hodge isometry $H^2(K,\QQ)\xrightarrow{ \ \sim \ } H^2(K',\QQ)$. Then the homological motives of $K$ and $K'$ are isomorphic.
	\end{corollary}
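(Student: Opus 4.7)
The plan is to upgrade the given $H^2$-isometry to a Hodge isomorphism of full cohomology algebras, and then to lift this to the motivic level using the semisimplicity and fullness results already at our disposal. First I would extend the Hodge isometry $H^2(K,\QQ)\xrightarrow{\sim} H^2(K',\QQ)$ to a Hodge isometry $\widetilde{\varphi}\colon H^2(\widetilde{K},\QQ)\xrightarrow{\sim} H^2(\widetilde{K}',\QQ)$: by \Cref{rmk:1}, each second cohomology decomposes orthogonally as $H^2(\widetilde{K},\QQ)=\nu^*H^2(K,\QQ)\oplus \QQ\cdot\tfrac{1}{2}[E]$, and the exceptional classes $\tfrac{1}{2}[E]$ and $\tfrac{1}{2}[E']$ are both of type $(1,1)$ and of Beauville--Bogomolov square $-2$, so sending one to the other produces the desired extension. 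Since $\widetilde{K}$ and $\widetilde{K}'$ are both of $\mathrm{OG}6$-type, hence deformation equivalent, and have $b_2=8\geq 7$, Soldatenkov's theorem \cite{solda19} then promotes $\widetilde{\varphi}$ to a Hodge isomorphism of graded cohomology algebras $\Phi\colon H^{\bullet}(\widetilde{K},\QQ)\xrightarrow{\sim}H^{\bullet}(\widetilde{K}',\QQ)$.

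Next I would verify that $\h(B_K)\cong \h(B_{K'})$ in $\mathsf{Mot}$. The restriction of $\widetilde{\varphi}$ to the transcendental parts is a Hodge similarity $H^2_{\mathrm{tr}}(K,\QQ)\to H^2_{\mathrm{tr}}(K',\QQ)$; by the functoriality of the Kuga--Satake construction (\Cref{rmk:functorialityKS}), the associated Kuga--Satake varieties are isogenous. But \Cref{thm:2}(ii) identifies these, up to isogeny, with $B_K^{4}$ and $B_{K'}^{4}$, so Poincar\'e reducibility and the uniqueness of the isotypic decomposition of abelian varieties force $B_K\sim B_{K'}$, and hence $\h(B_K)\cong \h(B_{K'})$ in $\mathsf{Mot}$.

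At this point I would invoke the motivic machinery already developed in the paper. By \Cref{thm:motivated}, both $\h(\widetilde{K})$ and $\h(\widetilde{K}')$ lie in $\langle \h(B_K)\rangle_{\mathsf{Mot}}$; by \Cref{thm:stdConj} combined with the classical result \cite{kleiman} that the standard conjectures hold for abelian varieties, this subcategory is semisimple abelian. Moreover, \Cref{thm:1+} guarantees the Hodge conjecture for all powers of $B_K$, which through the K\"unneth formula is exactly the statement that the Hodge realization functor $R$ is \emph{full} on $\langle \h(B_K)\rangle_{\mathsf{Mot}}$; it is tautologically faithful. Applied to the Hodge algebra isomorphism $\Phi$, fullness produces a morphism $f\colon \h(\widetilde{K})\to \h(\widetilde{K}')$ with $R(f)=\Phi$, and faithfulness on the semisimple abelian subcategory $\langle \h(B_K)\rangle_{\mathsf{Mot}}$ forces $f$ to be an isomorphism since $\Phi$ is.

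The main step to handle carefully is the invocation of Soldatenkov's theorem in the first paragraph: one must ensure that an \emph{arbitrary} Hodge isometry of $H^2$, not merely a parallel-transport operator, suffices to promote to a Hodge isomorphism of the full cohomology algebra. The statement in \cite{solda19} is formulated at this level of generality once $b_2\geq 7$, a condition satisfied here; with that step in hand, every remaining ingredient follows formally from the structural results established earlier in the paper.
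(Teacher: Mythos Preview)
Your proof is correct and follows essentially the same route as the paper's: promote the $H^2$-isometry to a full Hodge isomorphism via \cite{solda19}, observe that the associated abelian fourfolds $B_K$ and $B_{K'}$ are isogenous (this is the content of \Cref{rmk:precisation}), and then use the Hodge conjecture for $\widetilde{K}\times\widetilde{K}'$ (from \Cref{cor:applicationHC}) to algebraize the resulting Hodge class. You spell out explicitly the extension of the isometry from $H^2(K,\QQ)$ to $H^2(\widetilde{K},\QQ)$ and the Poincar\'e-reducibility step, which the paper leaves implicit, but the argument is the same.
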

	\begin{proof}
		By \cite{solda19} (see also \cite{floccari2020}), there exists an isomorphism of total Hodge structures $f\colon H^{\bullet}(\widetilde{K},\QQ)\xrightarrow{\ \sim \ } H^{\bullet}(\widetilde{K}',\QQ)$ (which in fact is an isomorphism of graded algebras). Then $f$ is induced by a Hodge class in $H^{12}(\widetilde{K}\times \widetilde{K}', \QQ)$. By Corollary \ref{cor:applicationHC} and Remark~\ref{rmk:precisation}, the Hodge conjecture holds for $\widetilde{K}\times \widetilde{K}'$ and therefore $f$ and its inverse are induced by algebraic correspondences; thus, $f$ is the realization of an isomorphism $\mathsf{f}\colon \h(\widetilde{K})\xrightarrow{ \ \sim \ } \h(\widetilde{K}')$ of homological motives.
	\end{proof}
	
	Let $L\subset \CC$ be a subfield which is finitely generated over $\QQ$, with algebraic closure $\bar{L}$, and let $X$ be a smooth and projective variety defined over $L$. 
	We consider on the one hand the \'etale cohomology $H^{\bullet}_{\text{\'et}}(X_{\bar{L}},\QQ_{\ell})$ for some prime number $\ell$, which is equipped with a continuous action of the absolute Galois group $\mathrm{Gal}(\bar{L}/L)$ of $L$; we let $\mathcal{G}_{\ell}(X) \subset \GL(H^{\bullet}_{\text{\'et}}(X_{\bar{L}},\QQ_{\ell}))$ denote the Zariski closure of the image of the Galois group via this representation. 
	On the other hand, we consider the singular cohomology $H^{\bullet}(X_{\mathbb{C}},\QQ)$, equipped with its natural Hodge structure; we denote by $\MT(X)\subset \GL(H^{\bullet}(X_{\mathbb{C}},\QQ))$ the Mumford--Tate group of this Hodge structure.
	Artin's comparison theorem provides an isomorphism of $\QQ_{\ell}$-algebras:
	\[
	H^{\bullet}(X_{\mathbb{C}},\QQ) \otimes_{\QQ} {\QQ_{\ell}} \xrightarrow{\ \sim \ } H^{\bullet}_{{\text{\'et}}}(X_{\bar{L}},\QQ_{\ell}).
	\]
	The Mumford--Tate conjecture is the following statement (see for instance \cite[\textbf{2.1}]{moonen2017}).
	\begin{conjecture}[Mumford--Tate conjecture]
		Artin's comparison theorem identifies $\MT(X)\otimes \QQ_{\ell}$ with the connected component of the identity of $\mathcal{G}_{\ell}(X)$. 
	\end{conjecture}
	If the Mumford--Tate conjecture holds for $X$, then the Hodge conjecture for $X_{\CC}$ and its powers is equivalent to the strong Tate conjecture for $X$ and its powers; see \cite[Proposition 2.3.2]{moonen17}.
	
	In this setting, we obtain the following result, a special case of which is Theorem \ref{thm:TateConjectureOG6resolutions} from the introduction.
	
	\begin{corollary}\label{cor:TateConjecture}
		Let the notation be as in Corollary \ref{cor:applicationHC}. Assume that $B_K$, $\widetilde{K}$, $Z_K$, and $S_K$ are all defined over a subfield $L\subset \CC$ which is finitely generated over $\QQ$. Then, for any prime number $\ell$, the strong Tate conjecture holds for any variety $X$ over $L$ isomorphic to a product of powers of $B_K$, $\widetilde{K}$, $Z_K$ and $S_K$.
	\end{corollary}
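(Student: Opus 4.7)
The plan is to combine Corollary \ref{cor:applicationHC}, which yields the Hodge conjecture for $X_\CC$ and all its powers, with the Mumford--Tate conjecture for $X$. By \Cref{thm:motivated} the homological motive of $X$ lies in $\langle \h(B_K)\rangle_{\mathsf{Mot}}$, and the standard conjectures hold for $B_K$ by \cite{kleiman}; hence \Cref{thm:stdConj} shows that this subcategory is semisimple abelian, producing an algebraic idempotent that cuts $\h(X)$ out of a direct sum of Tate twists of powers of $\h(B_K)$. Passing to $\ell$-adic realizations, this exhibits $H^{*}_{\et}(X_{\bar{L}},\QQ_\ell)$ as a Galois-equivariant direct summand of $H^{*}_{\et}(B_{K,\bar{L}}^N,\QQ_\ell)(*)$ for some $N$, and similarly as a sub-Hodge structure of $H^{*}(B_{K,\CC}^N,\QQ)(*)$ on the Betti side.

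Next I would verify the Mumford--Tate conjecture for $X$. It is classical for the K3 surface $S_K$ (Tankeev, Andr\'e), and proved for the hyper-K\"ahler factors $\widetilde{K}$ and $Z_K$ in \cite{floccari2019, soldatenkov19, FFZ}. For the abelian fourfold $B_K$, which is of Weil type with discriminant $1$, combining \Cref{thm:2}, \Cref{cor:1} and \Cref{rmk:2} shows that $\mathrm{KS}(S_K)$ is isogenous to a power of $B_K$; since both the Mumford--Tate group and the $\ell$-adic algebraic monodromy group of $H^1(B_K,\QQ)$ are obtained functorially from those of $H^2_{\mathrm{tr}}(S_K,\QQ)$ through the Clifford algebra, MTC for $B_K$ follows from MTC for $S_K$. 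Transporting through the Galois-equivariant and Hodge-theoretic splittings of the first paragraph then gives MTC for $X$.

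With HC and MTC both established for $X$, the two parts of the strong Tate conjecture follow by standard arguments. For surjectivity of the cycle class map onto Tate classes, any Galois-invariant class in $H^{2i}_{\et}(X_{\bar{L}},\QQ_\ell(i))$ is, after the Artin comparison and the choice of an embedding $\QQ_\ell\hookrightarrow\CC$, a Hodge class on $X_\CC$ by MTC, hence a $\QQ_\ell$-linear combination of classes of algebraic cycles on $X_\CC$ by Corollary \ref{cor:applicationHC}; a spreading and Galois-averaging argument produces cycles defined over $L$ whose $\ell$-adic classes span the entire space of Tate classes. For semisimplicity of the Galois action, Faltings' theorem gives semisimplicity of $H^{*}_{\et}(B_{K,\bar{L}}^N,\QQ_\ell)$, and semisimplicity is inherited by any Galois-equivariant direct summand, in particular by $H^{*}_{\et}(X_{\bar{L}},\QQ_\ell)$.

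The main obstacle is the Mumford--Tate conjecture for $B_K$, since MTC is open for general abelian fourfolds. It is overcome here by the Weil-type-with-discriminant-$1$ hypothesis, which via the Kuga--Satake construction reduces MTC for $B_K$ to the classical case of K3 surfaces applied to $S_K$. Once this input is secured, the propagation of MTC from $B_K$ to the product $X$, as well as the semisimplicity of the Galois representation of $X$, follow formally from the motivated structure provided by \Cref{thm:motivated}.
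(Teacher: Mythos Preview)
Your argument is correct and reaches the same conclusion, but it diverges from the paper's proof at two points.

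First, for the Mumford--Tate conjecture for $B_K$: the paper cites the classification-type input recorded by Moonen (MTC for abelian varieties of dimension $\leq 3$ and their products, and for simple abelian fourfolds with $\End_{\QQ}(A)\neq\QQ$), which covers every Weil fourfold. You instead reduce MTC for $B_K$ to MTC for the K3 surface $S_K$ via the Kuga--Satake link, in the spirit of Andr\'e. This is also valid and is arguably more in keeping with the paper's theme, since the isogeny $\mathrm{KS}(S_K)\sim B_K^m$ has already been established; one only needs to note that the Kuga--Satake variety and the isogeny descend to a finite extension of $L$, which is standard.

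Second, for propagating MTC to the product $X$: the paper invokes Commelin's theorem on products of varieties with abelian motive, applied to the factors $B_K,\widetilde{K},Z_K,S_K$. You bypass this by using \Cref{thm:motivated} and \Cref{thm:stdConj} to realise $\h(X)$ as an algebraic direct summand of a Tate twist of $\h(B_K^N)$, and then transfer MTC and semisimplicity from $B_K^N$ (Faltings) to $X$ through this summand. This works and makes the verification of MTC for $\widetilde{K}$ and $Z_K$ in your second paragraph superfluous: once you have MTC for $B_K$, the summand argument gives MTC for $X$ directly. The paper's route is shorter to write because it quotes Commelin as a black box; yours is more self-contained but does the bookkeeping by hand.

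One small correction: in your third paragraph, the phrase ``choice of an embedding $\QQ_\ell\hookrightarrow\CC$'' is not the right mechanism. The Artin comparison gives $H^{2i}_{\et}(X_{\bar L},\QQ_\ell)\cong H^{2i}(X_\CC,\QQ)\otimes_{\QQ}\QQ_\ell$, and MTC identifies the space of Tate classes (after a finite extension of $L$) with $(\text{Hodge classes})\otimes_{\QQ}\QQ_\ell$; no field embedding of $\QQ_\ell$ into $\CC$ is involved.
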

	\begin{proof}
		By Corollary \ref{cor:applicationHC}, the Hodge conjecture holds for $X_{\CC}$ and all of its powers. It will thus be sufficient to prove that the Mumford--Tate conjecture holds for $X$. By Commelin \cite[Theorem 10.3]{commelin2019}, it is enough to show that the Mumford--Tate conjecture holds for each of the varieties $B_K$, $\widetilde{K}$, $Z_K$ and $S_K$, since we know from Theorem \ref{thm:motivated} that these varieties all have abelian motive. The Mumford--Tate conjecture holds for $S_K$, as it holds for any K3 surface by \cite{Tan91, Andre1996}. It holds for $\widetilde{K}$ and~$Z_K$, because it has been proven for any hyper-K\"ahler variety of known deformation type in \cite{floccari2019, soldatenkov19, FFZ}. 
		As for $B_K$, while the Mumford--Tate conjecture is still open for abelian varieties of dimension $4$, it holds for any abelian fourfold of Weil type. Indeed, following \cite[\textbf{2.4.7}]{moonen17}, the conjecture holds for abelian varieties of dimension $\leq 3$ (and products of them, by \cite[Theorem 1.2]{commelin2019}) and for simple abelian fourfolds $A$ with $\End_{\QQ}(A)\neq \QQ$.
	\end{proof}

	\bibliographystyle{smfplain}
	\bibliography{bibliography}{}
	
\end{document}